\title{An FPT Algorithm for Splitting a Necklace Among Two Thieves}
\author{Michaela Borzechowski}{Department of Mathematics and Computer Science, Freie Universität Berlin, Germany}{michaela.borzechowski@fu-berlin.de}{}{DFG within the Research Training Group GRK~2434 \emph{Facets of Complexity}.}
\author{Patrick Schnider}{Department of Computer Science, ETH Zurich, Switzerland}{patrick.schnider@inf.ethz.ch}{https://orcid.org/0000-0002-2172-9285}{}
\author{Simon Weber}{Department of Computer Science, ETH Zurich, Switzerland}{simon.weber@inf.ethz.ch}{https://orcid.org/0000-0003-1901-3621}{Swiss National Science Foundation under project no. 204320.}
\authorrunning{M. Borzechowski, P. Schnider and S. Weber} %
\keywords{Necklace splitting, n-separability, well-separation, ham sandwich, FPT} %
\newcommand{\abs}[1]{\lvert #1 \rvert}
\crefname{claim}{claim}{claims}
\crefname{observation}{observation}{observations}
\newcommand{\erdos}{Erdős\xspace}
\newcommand{\f}[1]{\relax\ifmmode#1\else{$#1$}\fi}
\newcommand{\dimension}{\f{n}\xspace}
\newcommand{\sep}{\f{k}\xspace}
\newcommand{\Colors}{\f{C}\xspace}
\newcommand{\Reals}{\mathbb{R}\xspace}
\newcommand{\TwoThiefNecklaceSplitting}{2-Thief-Necklace-Splitting\xspace}
\newcommand{\thiefA}{\f{A^+}\xspace}
\newcommand{\thiefB}{\f{A^-}\xspace}
\newcommand{\nodeA}{\f{a}\xspace}
\newcommand{\nodeB}{\f{b}\xspace}
\newcommand{\nodeC}{\f{c}\xspace}
\newcommand{\const}{\f{k}\xspace}
\newcommand{\constB}{\f{\ell}\xspace}
\newcommand{\bound}{\omega}
\algrenewcommand\algorithmicrequire{\textbf{Input:}}
\algrenewcommand\algorithmicensure{\textbf{Output:}}
\newcommand{\NP}{\ensuremath{\mathsf{NP}}}
\newcommand{\coNP}{\ensuremath{\mathsf{co}}-\ensuremath{\mathsf{NP}}}
\newcommand{\UEOPL}{\ensuremath{\mathsf{UEOPL}}}
\newcommand{\PPA}{\ensuremath{\mathsf{PPA}}}
\begin{document}

\maketitle

\begin{abstract}
It is well-known that the 2-Thief-Necklace-Splitting problem reduces to the discrete Ham Sandwich problem. In fact, this reduction was crucial in the proof of the \PPA-completeness of the Ham Sandwich problem [Filos-Ratsikas and Goldberg, STOC'19]. Recently, a variant of the Ham Sandwich problem called $\alpha$-Ham Sandwich has been studied, in which the point sets are guaranteed to be well-separated [Steiger and Zhao, DCG'10]. The complexity of this search problem remains unknown, but it is known to lie in the complexity class \UEOPL [Chiu, Choudhary and Mulzer, ICALP'20]. We define the analogue of this well-separability condition in the necklace splitting problem --- a necklace is \emph{$n$-separable}, if every subset $A$ of the $n$ types of jewels can be separated from the types $[n]\setminus A$ by at most $n$ separator points. By the reduction to the Ham Sandwich problem it follows that this version of necklace splitting has a unique solution.

We furthermore provide two FPT algorithms: The first FPT algorithm solves 2-Thief-Necklace-Splitting on $(n-1+\ell)$-separable necklaces with $n$ types of jewels and $m$ total jewels in time $2^{O(\ell\log\ell)}+m^2$. In particular, this shows that 2-Thief-Necklace-Splitting is polynomial-time solvable on $n$-separable necklaces. Thus, attempts to show hardness of $\alpha$-Ham Sandwich through reduction from the 2-Thief-Necklace-Splitting problem cannot work.
The second FPT algorithm tests $(n-1+\ell)$-separability of a given necklace with $n$ types of jewels in time $2^{O(\ell^2)}\cdot n^4$. In particular, $n$-separability can thus be tested in polynomial time, even though testing well-separation of point sets is \coNP-complete [Bergold et al., SWAT'22].
\end{abstract}

\section{Introduction}
The \emph{necklace splitting problem} is one of the most famous problems in fair division.
It is usually illustrated by the following story: two thieves have stolen a valuable necklaces with $n$ different types of jewels (diamonds, rubies, etc.).
They want to divide their bounty fairly between them, that is, in such a way that both of them get the same number of jewels of each type.
As cutting through the necklace takes a lot of effort, they want to do this with as few cuts as possible.
A mathematically inclined thief who knows the necklace splitting theorem~\cite{alonSplitting, AlonWest, GoldbergWest} will realize that no matter how the jewels are ordered on the necklace, $n$ cuts will always suffice for this.
However, all known proofs of this result are of a topological nature and do not give our thief any information on how to find the cuts.
Thus, a more algorithmically inclined thief might wonder whether a set of $n$ cuts can be found efficiently.
Unfortunately, it turns out that the search problem of finding $n$ cuts is in general \PPA-complete \cite{HamSandwichPPAComplete}, making an efficient algorithm unlikely.
In this paper, we study separability conditions under which the thieves can find the cuts efficiently.

The ideas for the separability conditions stem from a variant of another famous fair division problem, namely the \emph{Ham Sandwich problem}.
The Ham Sandwich theorem \cite{HS} states that any $d$ point sets (or mass distributions) can be simultaneously bisected by a single hyperplane.
Again, finding such a Ham Sandwich Cut is in general \PPA-complete \cite{HamSandwichPPAComplete}.
However, under the assumption that the point sets are \emph{well-separated} (which we will formally define in \cref{sec:preliminaries}), the cut is unique \cite{originalDiscreteAlphaHS} and the corresponding search problem lies in the complexity class \UEOPL \cite{aHSinUEOPL}, a subclass of \PPA, which is conjectured to be a strict subclass.

The Ham Sandwich problem and the necklace splitting problem are intimately related.
In fact, the necklace splitting theorem can be proved by lifting the necklace with $n$ types of jewels to the \emph{moment curve} in $\Reals^n$, which is the curve parameterized by $(t,t^2,t^3,\ldots,t^n)$, and then applying the Ham Sandwich theorem.
By the same idea, the \PPA-hardness for the Ham Sandwich problem follows from the \PPA-hardness of the necklace splitting problem.
In the well-separated setting, no hardness result is known for finding the now unique Ham Sandwich cut.
A natural approach to show for example \UEOPL-hardness of this problem would be to show hardness for a necklace splitting variant whose lifts give well-separated point sets.
This leads to the definition of \emph{$n$-separable necklaces}, which we again define formally in \cref{sec:preliminaries}.

However, as we show in this paper, this approach will not work, as the necklace splitting problem on $n$-separable necklaces can be solved in polynomial time.
Relaxing the notion of separability further, we get an FPT algorithm for the necklace splitting problem, parameterized by the separability:

\begin{restatable}{theorem}{polytimeNecklaceSplitting}\label{thm:polytimeNecklaceSplitting_(n-1+l)-separable}
    \TwoThiefNecklaceSplitting can be solved in time $2^{O(\constB\log\constB)}+(\sum_{c\in\Colors}|c|)^2$ on $(\dimension-1+\constB)$-separable necklaces \Colors with $\dimension$ colors.
\end{restatable}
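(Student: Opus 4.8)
The plan is a kernelization followed by brute force. First I would note that a fair split can only exist if every color has even size (no thief can receive exactly half of an odd-size color), and assume this. Next I would pass to the \emph{run sequence} of the necklace, i.e.\ the sequence $w_1,\dots,w_R$ of its maximal monochromatic blocks: whether a given set of cut points separates a color set $A$ from $[n]\setminus A$ depends only on this sequence, and the minimum number of cuts that separates $A$ equals the number of indices $i$ at which $w_i$ and $w_{i+1}$ lie on opposite sides of $A$ --- that is, the size of the cut $(A,[n]\setminus A)$ in the multigraph $H$ on vertex set $[n]$ with one edge per pair of consecutive runs. Hence the necklace is $(n-1+\ell)$-separable if and only if $\mathrm{MaxCut}(H)\le n-1+\ell$; also $H$ is connected and has an Eulerian trail, since the run sequence traverses each of its edges exactly once.

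The central reduction rule is: if two consecutive runs carry distinct colors that each occur in a single run (call such colors \emph{simple}, the others \emph{complex}), delete both runs and decrease $n$ by $2$. Any fair $(n-2)$-split of the reduced necklace lifts to a fair $n$-split of the original by adding one cut in the middle of each deleted run; a short case analysis shows the alternating thief assignment stays balanced for every color, because the two new cuts flip the assignment of everything to their right exactly twice. Exhaustively applying this rule terminates (as $n$ strictly decreases) in $O(m^2)$ time and produces a \emph{reduced} necklace, one in which no two consecutive runs are both simple; hence the number of simple runs is at most one more than the number of complex runs, and bounding the total number of runs reduces to bounding the complex runs.

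The heart of the proof is the structural lemma that a reduced $(n-1+\ell)$-separable necklace has only $O(\ell)$ runs, hence $O(\ell)$ colors. By the equivalence above this amounts to showing that a reduced necklace with many runs admits a color set $A$ whose cut in $H$ exceeds $n-1+\ell$; equivalently, that $\mathrm{MaxCut}(H)\ge n-1+\Omega(R-n)$ for reduced necklaces. Since $\mathrm{MaxCut}(H)\ge n-1$ always holds for connected $H$ (take the bipartition of a spanning tree), what is needed is a cut beating this trivial bound by an amount growing with the number of runs, and it is precisely the reducedness that should make such a cut available. That the hypothesis cannot be dropped is witnessed by the ``star of triangles'' necklace $v,a_1,b_1,v,a_2,b_2,\dots,v,a_k,b_k,v$, which is $(n-1)$-separable yet has $R-n=\Theta(n)$ runs --- note that it reduces all the way down to a single run. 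I expect establishing this lemma, which has to combine reducedness with the Eulerian-trail structure of $H$, to be the main obstacle.

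Finally, given a reduced $(n-1+\ell)$-separable necklace, it remains to shorten the overlong runs in a feasibility-preserving way (inside a run, every split value in its range is realized by a single cut, so only a bounded amount of information about each run length affects feasibility), obtaining a kernel of size $\mathrm{poly}(\ell)$ with $n'=O(\ell)$ colors. One then solves the kernel by brute force: enumerate the $(R')^{n'}=2^{O(\ell\log\ell)}$ combinatorial shapes of an $n'$-cut solution (which run contains each cut, and in what order), and for each shape test feasibility of the $n'$ per-color balance conditions --- a linear system in the cut positions that is solved greedily from left to right in $\mathrm{poly}(\ell)$ time. This costs $2^{O(\ell\log\ell)}$ in total; the algorithm needs no advance knowledge of $\ell$, as its running time is simply governed by the size of the reduced necklace, and undoing the reductions recovers the $n$ cuts of the original in $O(m)$ time. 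Together with the $O(m^2)$ spent on reducing, the bound $2^{O(\ell\log\ell)}+m^2$ follows.
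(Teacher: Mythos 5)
Your overall frame (walk multigraph of runs, separability $=$ max-cut, and the reduction that deletes two adjacent single-run colors, which is exactly the paper's ``remove two neighboring intervals'' step) matches the paper, but the step you yourself flag as the main obstacle --- the structural lemma that a necklace with no two adjacent simple runs and separability at most $n-1+\ell$ has only $O(\ell)$ runs --- is false, so the kernel size cannot be bounded this way. Counterexample: take colors $x_1,\dots,x_k,y_1,\dots,y_k$ and the necklace whose run sequence is the Eulerian circuit $x_1,y_1,x_2,y_2,\dots,x_k,y_k,x_1,x_2,x_3,\dots,x_k,x_1$ (walk graph: the even cycle $x_1y_1x_2y_2\cdots x_ky_k$ plus the inner cycle $x_1x_2\cdots x_k$). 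Every $y_i$ is a single run whose neighbors are the two-run colors $x_i,x_{i+1}$, so no two simple runs are ever adjacent and your reduction rule never fires. Yet the max-cut of this graph is exactly $2k$: for any bipartition of the $x_i$'s cutting $j$ inner edges, the $y_i$'s contribute $j+2(k-j)$ outer edges, for a total of $2k$ independent of $j$. Hence the necklace has $n=2k$ colors, separability $n=n-1+1$ (so $\ell=1$), but $3k+1$ runs --- your lemma would force $O(1)$ runs. So ``reducedness'' plus the Eulerian structure genuinely does not give a small kernel; the degree-$4$ (two-component) colors are exactly what your counting cannot control, since with half the colors of degree $2$ and half of degree $4$ the edge count is only about $\tfrac32 n$ and the Edwards--Erd\H{o}s bound only yields a cut of about $n$, not $n+\Omega(n)$.

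What is missing is a second reduction handling colors with exactly two components, which is how the paper proceeds: if some color $c$ has exactly two components, keep only the larger component (under the odd-cardinality convention the unique split point of $c$ must lie there), solve recursively, and shift that split point within the component to rebalance $c$; this leaves all other colors' balances untouched. The paper's key structural theorem then shows, via degree counting plus the Poljak--Turz\'{\i}k strengthening of Edwards--Erd\H{o}s applied to the walk multigraph, that whenever $n\geq 6\ell+2$ an $(n-1+\ell)$-separable necklace contains either two adjacent intervals or a color with exactly two components, so the recursion only bottoms out at $n=O(\ell)$ colors, where brute force costs $2^{O(\ell\log\ell)}$. In my example above every $x_i$ has exactly two components, so that rule applies and the instance shrinks, which is precisely what your scheme lacks. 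A secondary point: your opening assumption that a fair split requires every color to have even size conflicts with the paper's setting, where the intervals are open, each color has odd size, and each solution places exactly one split point on a point of each color; that odd-size convention is in fact what makes the ``keep the larger component'' rule work.
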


We also provide an FPT algorithm to check whether a necklace is $(\dimension-1+\constB)$-separable.
This is again in contrast to the Ham Sandwich problem, where it has been shown that checking well-separation is \coNP-complete \cite{WellSeparationCoNP}.

Our work provides the first FPT viewpoint on the necklace splitting problem, which so far has only been studied from the viewpoint of approximation algorithms \cite{alonApproximation}.

\section{Preliminaries}\label{sec:preliminaries}
\subsection{Separability and Unique Solutions}

\begin{definition}[Necklace]
    A \emph{necklace} is a set $C$ of disjoint finite subsets of $\Reals$. The sets in~$C$ are called \emph{colors}.
\end{definition}

Note that in the literature, this kind of necklace is also called an \emph{open} necklace, since the colors are arranged in $\Reals$ and not on a cycle.

For simplicity, in the rest of this paper we assume that each color has an \emph{odd} number of points. All of our results can be adapted to the more general setting without this restriction, or even to the setting where colors are finite unions of intervals. However, the definitions and proofs have to be adjusted carefully. We discuss these possible extensions of our results in \Cref{app:non-odd}.

Since colors in a necklace are disjoint, we can view our necklace as a string over the alphabet $\Colors$: each color defines one character and the sequence of characters is defined by the
order in which the colors appear when going from $-\infty$ to $\infty$, with consecutive occurrences of the same color yielding just one character.
See \cref{fig:exampleOfSeparability} for an example.

We call the number of occurrences of a color $c$ in this string the number of \emph{components} it consists of.
We say a color $c \in \Colors$ is an \emph{interval}, if it consists of exactly one component. 
In other words, a color $c$ is an interval if its convex hull does not intersect any other color $c'\in\Colors$. In \cref{fig:2separable}, the green color $c$ is an interval, whereas the red color $a$ is not, it consists of two components.

\begin{definition}[Separability]
A necklace \Colors is \emph{\sep-separable} if for all $A \subseteq \Colors$ there exist \sep \emph{separator points} $s_1<\ldots<s_\sep\in\Reals$ that separate $A$ from $\Colors \setminus A$. More formally, if we alternatingly label the intervals $(-\infty,s_1],[s_1,s_2],\ldots,[s_{\sep-1},s_\sep],[s_\sep,\infty)$ with $A$ and $\overline{A}$, for every interval $I$ labelled $A$ we have $I\cap \bigcup_{c\in (\Colors\setminus A)}c=\emptyset$ and for every interval $I'$ labelled $\overline{A}$ we have $I'\cap \bigcup_{c\in A} c = \emptyset$.

The \emph{separability} $sep(\Colors)$ of a necklace $\Colors$ is the minimum integer $k\geq 0$ such that $\Colors$ is $k$-separable.
\end{definition}

\begin{figure}[h!]

\begin{subfigure}[b]{0.3\textwidth}
\centering
\includegraphics[page=1]{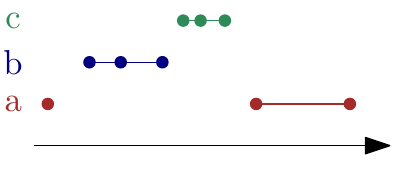}
\caption{``\nodeA \nodeB \nodeC \nodeA'' is 2-separable.}
\label{fig:2separable}
\end{subfigure}
\hfill
\begin{subfigure}[b]{0.3\textwidth}
\centering
\includegraphics[page=2]{figs/separability.pdf}
\caption{``\nodeA \nodeB \nodeA \nodeC'' is 3-separable.}
\label{fig:3separable}
\end{subfigure}
\hfill
\begin{subfigure}[b]{0.3\textwidth}
\centering
\includegraphics[page=3]{figs/separability.pdf}
\caption{``\nodeA \nodeB \nodeA \nodeB \nodeC'' is 4-separable.}
\label{fig:4separable}
\end{subfigure}
\caption{Necklace \Colors with 3 colors \nodeA, \nodeB and \nodeC.}
\label{fig:exampleOfSeparability}
\end{figure}

Note that for a necklace with $n$ colors, $sep(C)\geq n-1$, and this is tight, as can be seen in \Cref{fig:2separable}.
Our definition of \sep-separability is strongly related to the well known notion of \emph{well-separation}.

\begin{definition}
Let $P_1, \dots, P_\sep \subset \Reals^d$ be point sets. They are \emph{well-separated} if and only if for every non-empty index set $I \subset [\sep]$, the convex hulls of the two disjoint subfamilies  $\bigcup_{i \in I} P_i$ and $\bigcup_{i \in [\sep]\setminus I} P_i$ can be separated by a hyperplane.
\end{definition}

A set of two colors in $\Reals$ is $1$-separable if and only if it is well-separated.
Furthermore we observe the following property.

\begin{lemma}
\label{lem:dSeparable<=>wellSeparable}
Let \Colors be a set of \dimension colors in $\Reals$. Let $\Colors'$ be the set of subsets of $\Reals^\dimension$ obtained by lifting each point in each color of $\Colors$ to the \dimension-dimensional moment curve using the function $f(t)=(t,t^2,\ldots,t^\dimension)$.
Then the set \Colors is \dimension-separable if and only if $\Colors'$ is well-separable.
\end{lemma}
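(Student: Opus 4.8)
The plan is to exploit the classical dictionary between hyperplanes in $\Reals^\dimension$ and real univariate polynomials of degree at most $\dimension$ via the moment curve $f(t)=(t,t^2,\ldots,t^\dimension)$. A hyperplane $\{x\in\Reals^\dimension : \langle a,x\rangle = b\}$ with $a\neq 0$ meets the moment curve precisely at the real roots of $p_{a,b}(t):=\langle a,f(t)\rangle - b$, which is a nonzero polynomial of degree between $1$ and $\dimension$; conversely, every polynomial of degree between $1$ and $\dimension$ arises as some $p_{a,b}$ (read $a$ off the coefficients of $t,\ldots,t^\dimension$ and $b$ off the negated constant term). Moreover, the side of the hyperplane on which a curve point $f(t)$ lies is recorded by the sign of $p_{a,b}(t)$. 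The governing fact is that such a polynomial has at most $\dimension$ real roots and thus changes sign at most $\dimension$ times, matching exactly the number of separator points allowed by $\dimension$-separability.

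For the ``only if'' direction, assume $\Colors$ is $\dimension$-separable and fix a non-trivial partition of the colors into $A$ and $\Colors\setminus A$; let $s_1<\cdots<s_\dimension$ be separator points for $A$. I would take $p(t)=\prod_{j=1}^\dimension(t-s_j)$ and let $h$ be the hyperplane corresponding to $p$. First observe that no color point lies exactly at some $s_j$: with the closed-interval labelling in the definition it would otherwise belong to two consecutive intervals carrying opposite labels, contradicting the separation property. Hence $p$ is nonzero at every color point, and since $p$ has simple roots exactly at $s_1,\ldots,s_\dimension$, its sign is constant on each open interval $(-\infty,s_1),(s_1,s_2),\ldots,(s_\dimension,\infty)$ and alternates between them; this matches the alternating $A$/$\overline A$ labelling, so $p$ has one fixed sign on all $A$-intervals and the opposite on all $\overline A$-intervals. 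Consequently all lifts of colors in $A$ lie strictly on one side of $h$ and all lifts of colors in $\Colors\setminus A$ strictly on the other, so $h$ separates their convex hulls. As this holds for every non-trivial partition (trivial partitions impose nothing), $\Colors'$ is well-separated.

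For the ``if'' direction, assume $\Colors'$ is well-separated and fix colors $A$ with $A,\Colors\setminus A\neq\emptyset$ (trivial cases being immediate by placing all separators left of every color point). Well-separation provides a hyperplane separating the convex hulls of the lifts of the colors in $A$ and in $\Colors\setminus A$, and I claim it can be chosen to separate the two point sets \emph{strictly}. Indeed, if the two convex hulls shared a point $x$, then writing $x$ as a convex combination of points from each side and substituting into the separating inequality forces every point with positive weight onto the separating hyperplane $h$; but $h$ meets the moment curve in at most $\dimension$ points, any at most $\dimension+1$ points of the moment curve are affinely independent by a Vandermonde argument, so the positively weighted points of the two sides span disjoint faces of a simplex and cannot both contain $x$ — a contradiction. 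From a strict separator we obtain a nonzero polynomial $p$ of degree at most $\dimension$ that is positive at every parameter of a color point in $A$ and negative at every parameter of a color point in $\Colors\setminus A$. Now sort all color points as $t_1<\cdots<t_N$; whenever $t_\ell$ and $t_{\ell+1}$ lie in colors on opposite sides, $p$ has a root in the open interval $(t_\ell,t_{\ell+1})$, and since these intervals are pairwise disjoint the roots are distinct, so there are at most $\dimension$ such transitions. Placing one separator point in each transition gap yields at most $\dimension$ separator points; between consecutive separators all color points lie on the same side, so the induced labelling is a legal alternating $A$/$\overline A$ labelling and no separator coincides with a color point. Hence $A$ is separated from $\Colors\setminus A$ by at most $\dimension$, and therefore (padding with separators inside an empty region, which may require flipping all labels once) by exactly $\dimension$, separator points; thus $\Colors$ is $\dimension$-separable.

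I expect the main obstacle to be the ``if'' direction: translating color transitions along the necklace into sign changes of $p$ so that the bound of $\dimension$ separator points is airtight, together with the easily overlooked step of upgrading the separating hyperplane supplied by well-separation to one that \emph{strictly} separates the two point sets — this is precisely where affine independence of few points on the moment curve enters. The ``only if'' direction is essentially routine once the polynomial is written down, the sole point of care being the degenerate coincidence of a color point with a separator point, which the definition itself rules out.
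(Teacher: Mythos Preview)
Your proof is correct and follows essentially the same approach as the paper: both exploit the correspondence between hyperplanes in $\Reals^\dimension$ and degree-$\leq\dimension$ polynomials via the moment curve, so that separator points on the line become hyperplane--curve intersections and vice versa. The paper outsources the key geometric fact (that the curve crosses sides of the hyperplane at each intersection, and there are at most $\dimension$ of them) to \cite[Lemma~5.4.2]{matousek2002lectures}, whereas you unpack it explicitly through the sign pattern of the associated polynomial; along the way you also handle two edge cases the paper leaves implicit---that no color point can coincide with a separator, and that the separating hyperplane in the ``if'' direction can be taken strict---so your version is more self-contained.
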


\begin{proof}

If \Colors is \dimension-separable, for each subset $A$ of \Colors, there exist \dimension points $S=(s_1, \dots, s_\dimension)$ partitioning \Colors into intervals alternatingly labelled $A$ and $\overline{A}$.
Let $H$ be the hyperplane that goes through these separator points $S$ lifted to the moment curve.
By \cite[Lemma~5.4.2]{matousek2002lectures}, at each separating point, the moment curve passes from one side of $H$ to the other.
The points belonging to intervals labelled $A$ lie on one side of the hyperplane and the points belonging to intervals labelled $\overline{A}$ lie on the other side.
Since this holds for all subsets of \Colors, it follows that $\Colors'$ is well-separated.

If $\Colors'$ is well-separated, for each subset $A'$ of colors, there exists a hyperplane that separates $A'$ from $\Colors' \setminus A'$.
By \cite[Lemma~5.4.2]{matousek2002lectures}, this hyperplane intersects the moment curve at at most \dimension points. These points define the separator points that show that \Colors is \dimension-separable.
\end{proof}

The problem of \emph{Necklace Splitting} is that two thieves want to split the necklace they stole into equal parts with as few cuts as possible.
Mathematically we partition the necklace into several intervals which belong to each thief in turn.

\begin{definition}[\TwoThiefNecklaceSplitting]\label{def:TwoThiefNecklaceSplitting}
Given a necklace \Colors with \dimension colors, find \dimension \emph{split points} that split the necklace into $\dimension + 1$ open intervals alternatingly labelled \thiefA and \thiefB, such that for each color $c\in C$, the union of all intervals labelled \thiefA contains the same number of points of $c$ as the union of all intervals labelled \thiefB.
\end{definition}

It is well known that there always exists a solution 
to this problem \cite{alonSplitting, AlonWest, GoldbergWest}.
Note that due to our assumption of every color containing an odd number of points, every solution must contain exactly one point per color as a split point.

\begin{figure}[h!]

\begin{subfigure}[b]{0.3\textwidth}
\centering
\includegraphics[page=5]{figs/separability.pdf}
\caption{Solution for ``\nodeA \nodeB \nodeC \nodeA''.}
\label{fig:2separableSolution}
\end{subfigure}
\hfill
\begin{subfigure}[b]{0.3\textwidth}
\centering
\includegraphics[page=6]{figs/separability.pdf}
\caption{Solution for ``\nodeA \nodeB \nodeA \nodeC''.}
\label{fig:3separableSolution}
\end{subfigure}
\hfill
\begin{subfigure}[b]{0.3\textwidth}
\centering
\includegraphics[page=7]{figs/separability.pdf}
\caption{Solution for ``\nodeA \nodeB \nodeA \nodeB \nodeC''.}
\label{fig:4separableSolution}
\end{subfigure}
\caption{Example of solutions to \TwoThiefNecklaceSplitting.}
\label{fig:NecklaceExample}
\end{figure}

\begin{theorem}
\label{thm:uniquenessOfSolution}
Let \Colors be a \dimension-separable necklace with \dimension colors.
There is a unique solution to \TwoThiefNecklaceSplitting on \Colors.
\end{theorem}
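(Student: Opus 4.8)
The plan is to reduce uniqueness for necklace splitting on an $\dimension$-separable necklace to uniqueness of the Ham Sandwich cut for well-separated point sets, exactly along the lines of the lift used in \cref{lem:dSeparable<=>wellSeparable}. First I would lift each color $c\in\Colors$ to the $\dimension$-dimensional moment curve via $f(t)=(t,t^2,\ldots,t^\dimension)$, obtaining the point set $\Colors'=\{c':c\in\Colors\}$. By \cref{lem:dSeparable<=>wellSeparable}, since $\Colors$ is $\dimension$-separable, the family $\Colors'$ is well-separated. By the result of Steiger and Zhao~\cite{originalDiscreteAlphaHS}, well-separated point sets in $\Reals^\dimension$ have a \emph{unique} Ham Sandwich cut, i.e.\ a unique hyperplane simultaneously bisecting all $\dimension$ of them.

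Next I would set up the correspondence between split point sets for $\Colors$ and bisecting hyperplanes for $\Colors'$. Given $\dimension$ split points $t_1<\cdots<t_\dimension$ on the necklace, let $H$ be the hyperplane through the $\dimension$ lifted points $f(t_1),\ldots,f(t_\dimension)$; by \cite[Lemma~5.4.2]{matousek2002lectures} the moment curve crosses from one side of $H$ to the other exactly at these $\dimension$ parameter values, so the open intervals $(-\infty,t_1),(t_1,t_2),\ldots,(t_\dimension,\infty)$ map alternately to the two open halfspaces of $H$. Hence the labeling of the necklace intervals by \thiefA and \thiefB corresponds precisely to sorting the lifted points of each color into the two sides of $H$, and the split points themselves lie on $H$. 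Recalling that every color has an odd number of points and every solution uses exactly one point of each color as a split point (as noted after \cref{def:TwoThiefNecklaceSplitting}), the balance condition ``each side gets the same number of points of $c$'' becomes ``$H$ has $\lfloor|c|/2\rfloor$ points of $c'$ strictly on each side and one on $H$,'' i.e.\ $H$ bisects $c'$. Thus a split-point solution for $\Colors$ yields a bisecting hyperplane for $\Colors'$.

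Conversely, I would argue that the unique Ham Sandwich hyperplane $H^*$ of $\Colors'$ gives back a necklace-splitting solution. Since $\Colors'$ is well-separated, $H^*$ is in \emph{general position} with respect to $\Colors'$ in the relevant sense: it passes through exactly one point of each color (this is a standard property of the $\alpha$-Ham Sandwich cut for well-separated sets, and one would cite it or derive it from the fact that each $c'$ has odd cardinality together with the bisection condition). By \cite[Lemma~5.4.2]{matousek2002lectures}, $H^*$ meets the moment curve in at most $\dimension$ points; since it already contains $\dimension$ of the lifted color points, it meets the curve in exactly these, and they have distinct parameters $t_1^*<\cdots<t_\dimension^*$. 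Taking these as split points and labeling the resulting intervals by which side of $H^*$ they map to gives a valid solution to \TwoThiefNecklaceSplitting. Because this correspondence is a bijection between solutions of the necklace problem and bisecting hyperplanes of $\Colors'$, and the latter is unique, the former is unique as well.

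The main obstacle I expect is the careful handling of the split points lying exactly on the hyperplane: one must make sure that ``split point = lifted point on $H$'' lines up correctly with the odd-cardinality convention and the definition of bisection used in the $\alpha$-Ham Sandwich literature, and that no solution is lost or double-counted in the bijection (in particular, that a bisecting hyperplane cannot pass through two points of the same color, which is where well-separation — not merely the moment-curve degree bound — is actually needed). Everything else is bookkeeping on top of \cref{lem:dSeparable<=>wellSeparable} and the uniqueness theorem of~\cite{originalDiscreteAlphaHS}.
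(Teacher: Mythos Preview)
Your proposal is correct and follows essentially the same approach as the paper: lift to the moment curve, invoke \cref{lem:dSeparable<=>wellSeparable} to get well-separation, and then apply the uniqueness part of the $\alpha$-Ham-Sandwich theorem of Steiger and Zhao. The paper phrases the last step as a short contradiction argument (a second necklace solution would lift to a second $(\lceil|c_1|/2\rceil,\ldots,\lceil|c_n|/2\rceil)$-cut, contradicting \cref{lem:alphaHS}) rather than writing out the full bijection, but the content is the same.
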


In order to prove the above theorem, we consider the classical reduction of \TwoThiefNecklaceSplitting to the Ham-Sandwich problem obtained by lifting the points to the moment curve, as it appeared in many works before \cite{momentCurve2,HamSandwichPPAComplete,matousek2002lectures,momentCurve1}. However, since the necklace we apply this reduction to is $\dimension$-separable, by \Cref{lem:dSeparable<=>wellSeparable}, the resulting points are well-separated, which allows us to apply the following stronger version of the Ham-Sandwich theorem due to Steiger and Zhao~\cite{originalDiscreteAlphaHS}.

\begin{lemma}[{$\alpha$-Ham-Sandwich Theorem, \cite{originalDiscreteAlphaHS}}]\label{lem:alphaHS}
Let $P_1, \dots, P_\dimension \subset \Reals^\dimension$ be finite well-separated point sets in \emph{weak general position}, and let $\alpha_1, \dots, \alpha_\dimension$ be positive integers with $\alpha_i \leq \abs{P_i}$, then there exists a unique $(\alpha_1, \dots, \alpha_\dimension)$-cut, i.e., a hyperplane $H$ that contains a point from each color and such that for the closed positive halfspace $H^+$ bounded by $H$ we have $\abs{H^+ \cap P_i} = \alpha_i$.
\end{lemma}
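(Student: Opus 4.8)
The plan is to prove both existence and uniqueness in one shot by pushing the problem through the moment-curve reduction and then invoking the $\alpha$-Ham-Sandwich theorem (\Cref{lem:alphaHS}), whose built-in uniqueness is exactly the statement we want. First I would lift every point of every color $c\in\Colors$ to the \dimension-dimensional moment curve via the map $f$ from \Cref{lem:dSeparable<=>wellSeparable}, obtaining finite point sets $P_c\subset\Reals^\dimension$. Since \Colors is \dimension-separable, \Cref{lem:dSeparable<=>wellSeparable} guarantees that the family $\{P_c\}_{c\in\Colors}$ is well-separated. I would then note that, lying on the moment curve, the lifted points are in general position (any hyperplane meets the curve in at most \dimension points, so no $\dimension+1$ of them are coplanar) and therefore in particular satisfy the weak-general-position hypothesis of \Cref{lem:alphaHS}.

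Next I would fix the target sizes $\alpha_c=(\abs{P_c}+1)/2$. These are positive integers precisely because each color has odd cardinality, and they satisfy $\alpha_c\le\abs{P_c}$, so \Cref{lem:alphaHS} applies and yields a \emph{unique} hyperplane $H$ that contains exactly one point of each color and satisfies $\abs{H^+\cap P_c}=\alpha_c$ for the closed positive halfspace $H^+$. The accounting observation that makes the choice of $\alpha_c$ work is that the single point of $P_c$ lying on $H$ is counted in both closed halfspaces, so $\abs{H^+\cap P_c}+\abs{H^-\cap P_c}=\abs{P_c}+1$; combined with $\alpha_c=(\abs{P_c}+1)/2$ this forces $\abs{H^-\cap P_c}=\alpha_c$ as well. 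Hence the two closed halfspaces carry equal counts of every color, so the cut is symmetric in the two thieves and the labelling of the sides is irrelevant to whether it is a valid split.

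To read off an actual necklace splitting I would use that $H$ meets the moment curve in at most \dimension points (\cite[Lemma~5.4.2]{matousek2002lectures}); since $H$ already contains one lifted point per color, these are exactly \dimension points, one per color, whose parameters on the curve are the desired split points. By the same lemma the curve passes from one side of $H$ to the other at each of these points, so the $\dimension+1$ open intervals of the necklace alternate between $H^+$ and $H^-$; labelling them \thiefA and \thiefB accordingly and using $\abs{H^+\cap P_c}=\abs{H^-\cap P_c}$ shows each thief receives $(\abs{P_c}-1)/2$ jewels of every color $c$. This establishes existence of a solution.

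For uniqueness I would run the correspondence backwards. As noted after \Cref{def:TwoThiefNecklaceSplitting}, any solution uses exactly one point of each color as a split point, so lifting these \dimension points gives \dimension points on the moment curve; being on the curve they are affinely independent and span a unique hyperplane $H'$. Tracing the alternation of the intervals and adding the on-hyperplane jewel to the strictly positive side shows $\abs{(H')^+\cap P_c}=(\abs{P_c}-1)/2+1=\alpha_c$ for every $c$, so $H'$ is an $(\alpha_c)_{c}$-cut. Since \Cref{lem:alphaHS} asserts that this cut is unique, $H'=H$, and therefore the set of split points is uniquely determined. The main delicacy I expect is the bookkeeping between the \emph{open} intervals that constitute the thieves' shares and the \emph{closed} halfspaces of the $\alpha$-cut, i.e. correctly locating the single on-hyperplane jewel of each color; once the identity $\abs{H^+\cap P_c}+\abs{H^-\cap P_c}=\abs{P_c}+1$ is established, the choice $\alpha_c=(\abs{P_c}+1)/2$ makes both the fairness condition and the swap-symmetry of the two thieves automatic, and the whole statement collapses to the uniqueness guaranteed by \Cref{lem:alphaHS}.
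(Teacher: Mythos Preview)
Your write-up is not a proof of \Cref{lem:alphaHS} at all: it is a proof of \Cref{thm:uniquenessOfSolution}, the uniqueness of the necklace-splitting solution on an $n$-separable necklace. The very first line already gives this away, since you propose to \emph{invoke} \Cref{lem:alphaHS} in order to establish the claim; if the claim really were \Cref{lem:alphaHS}, this would be circular. In the paper, \Cref{lem:alphaHS} is the $\alpha$-Ham-Sandwich theorem of Steiger and Zhao and is quoted from~\cite{originalDiscreteAlphaHS} as a black box --- the paper does not prove it, so there is no in-paper proof to compare against.

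If your intent was to prove \Cref{thm:uniquenessOfSolution}, then what you wrote is correct and follows the same route as the paper's proof: lift to the moment curve, use \Cref{lem:dSeparable<=>wellSeparable} to get well-separation, note (weak) general position, apply \Cref{lem:alphaHS} with $\alpha_c=\lceil |c|/2\rceil$, and read the unique hyperplane back as the unique set of split points. Your version is a bit more explicit about the bookkeeping between open intervals and closed halfspaces, but the argument is the same. Just be aware that you have addressed the wrong labelled statement.
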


\begin{proof}[Proof of \cref{thm:uniquenessOfSolution}]
We lift all the points in \Colors to the moment curve.
The points are in general position \cite{matousek2002lectures} (and thus also in weak general position).
By \cref{lem:dSeparable<=>wellSeparable} if \Colors is \dimension-separable, then the point sets lifted to the moment curve are well-separated. 

By the $\alpha$-Ham-Sandwich theorem there exists a unique $(\lceil\frac{|c_1|}{2}\rceil,\ldots,\lceil\frac{|c_n|}{2}\rceil)$-cut that halves all colors.
This cut is a hyperplane $H$ that goes through \dimension lifted points, one point of each color.
These points define a solution $Q =(q_1, \dots, q_\dimension)$ of \TwoThiefNecklaceSplitting.

Assume that the solution $Q$ is not unique, i.e., there is another solution $Q'\neq Q$ to \Colors.
The points $Q'$ lifted to the moment curve define another hyperplane $H' \neq H$ with one point of each color, which is also a $(\lceil\frac{|c_1|}{2}\rceil,\ldots,\lceil\frac{|c_n|}{2}\rceil)$-cut.
But by \Cref{lem:alphaHS} there is a unique hyperplane with this property, so $Q'$ cannot exist.
\end{proof}

In this proof, we do not use the property that \Cref{lem:alphaHS} guarantees that there is a solution or \emph{every} choice of $\alpha$, we merely use it for the guaranteed uniqueness of a solution for a halving cut.

Note that the opposite direction of \cref{thm:uniquenessOfSolution} does not hold, i.e., there are necklaces with \dimension colors which are not \dimension-separable but still have unique solutions for \TwoThiefNecklaceSplitting, see \cref{fig:4separableSolution} for an example.

\subsection{Graph-Theoretic Aspects}

To argue about the separability of necklaces, we wish to think about graphs rather than strings or even point sets. For every necklace, we thus define its walk graph:
\begin{definition}[Walk graph]
Given a necklace \Colors, the walk graph $G_\Colors$ is the multigraph with $V=\Colors$ and with every potential edge $\{a,b\}\in \binom{V}{2}$ having multiplicity equal to the number of substrings ``$ab$'' plus the number of substrings ``$ba$'' in the string describing \Colors.
\end{definition}

The walk graphs of the example necklaces in \cref{fig:exampleOfSeparability} can be seen in \cref{fig:WalkGraphExample}.

\begin{figure}[h!]
\begin{subfigure}[b]{0.3\textwidth}
\centering
\begin{tikzpicture}
\node (a) at (0,0) {\nodeA};
\node (b) at (2,0) {\nodeB};
\node (c) at (1,1.5) {\nodeC};
\draw (a) edge (b);
\draw (b) edge (c);
\draw (c) edge (a);
\end{tikzpicture}
\caption{Walk graph for ``\nodeA \nodeB \nodeC \nodeA''}
\end{subfigure}
\begin{subfigure}[b]{0.3\textwidth}
\centering
\begin{tikzpicture}
\node (a) at (0,0) {\nodeA};
\node (b) at (2,0) {\nodeB};
\node (c) at (1,1.5) {\nodeC};
\draw (a) edge (b);
\draw (a) edge[bend left] (b);
\draw (c) edge (a);
\end{tikzpicture}
\caption{Walk graph for ``\nodeA \nodeB \nodeA \nodeC''}
\end{subfigure}
\begin{subfigure}[b]{0.3\textwidth}
\centering
\begin{tikzpicture}
\node (a) at (0,0) {\nodeA};
\node (b) at (2,0) {\nodeB};
\node (c) at (1,1.5) {\nodeC};
\draw (a) edge (b);
\draw (a) edge[bend left] (b);
\draw (a) edge[bend right] (b);
\draw (c) edge (b);
\end{tikzpicture}
\caption{Walk graph for ``\nodeA \nodeB \nodeA \nodeB \nodeC''}
\end{subfigure}
\caption{Walk graphs of the examples in \cref{fig:exampleOfSeparability}.}
\label{fig:WalkGraphExample}
\end{figure}

Note that given a necklace \Colors as a set of point sets, both the string describing it as well as the walk graph can be built in linear time in the size of the necklace $\sum_{c\in\Colors}|c|$ .

\begin{observation}\label{obs:semiEulerian}
The walk graph of a necklace is connected and semi-Eulerian.
\end{observation}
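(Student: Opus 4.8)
The plan is to recognize that the string describing the necklace is \emph{itself} an Eulerian trail of the walk graph $G_\Colors$; both assertions of the observation then fall out immediately.

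Concretely, I would write the string describing \Colors as $x_1 x_2 \cdots x_L$, where each $x_i$ is a color and $x_i \neq x_{i+1}$ for all $i$ (consecutive equal characters having been merged when forming the string). By the definition of the walk graph, the edge multiset of $G_\Colors$ is precisely $\{\, \{x_i,x_{i+1}\} : 1 \le i \le L-1 \,\}$, counted with multiplicity — this is exactly how the multiplicity of each potential edge was defined. Hence the sequence $(x_1,\dots,x_L)$ is a walk in $G_\Colors$ that traverses every edge exactly once, i.e.\ an Eulerian trail (closed if $x_1 = x_L$, open otherwise). This single observation is the heart of the argument.

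For connectedness, I would note that every color of \Colors occurs in the string, since every color is nonempty (it has an odd, hence positive, number of points); thus every vertex of $G_\Colors$ appears among $x_1,\dots,x_L$. Assuming \Colors is nonempty, if $L=1$ then $G_\Colors$ is a single isolated vertex and there is nothing to prove, and if $L\ge 2$ the walk $(x_1,\dots,x_L)$ visits every vertex, so $G_\Colors$ is connected. For the ``semi-Eulerian'' part I would simply invoke the standard fact that a connected multigraph carrying an Eulerian trail is semi-Eulerian — equivalently, that at most two of its vertices have odd degree (only $x_1$ and $x_L$ can, since each \emph{internal} occurrence of a color in the string contributes exactly $2$ to that color's degree).

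I do not anticipate a genuine obstacle: once the string is read as a walk, the statement is essentially a tautology. The only points needing a moment's care are the degenerate single‑color case (empty edge set) and fixing the convention that here ``semi-Eulerian'' is meant in the sense of \emph{admitting} an Eulerian trail, a class that also contains Eulerian multigraphs such as the triangular walk graph arising from a necklace like ``$abca$''.
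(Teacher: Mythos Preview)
Your argument is correct and is precisely the reasoning the paper has in mind: the observation is stated without proof in the paper, and the very name ``walk graph'' signals that the defining string $x_1\cdots x_L$ is read as an Eulerian trail, from which connectedness and semi-Eulerianity are immediate. There is nothing to add.
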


Recall the following well-known fact about semi-Eulerian graphs.
\begin{lemma}\label{lem:odddegrees}
    In a semi-Eulerian (multi-)graph, at most two vertices have odd degrees.
\end{lemma}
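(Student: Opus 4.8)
The plan is to invoke the defining property of a semi-Eulerian graph: it admits an Eulerian trail, i.e., a walk $W = v_0\, e_1\, v_1\, e_2 \cdots e_k\, v_k$ traversing every edge of $G$ exactly once. I would show that every vertex other than the two endpoints $v_0$ and $v_k$ has even degree; this immediately bounds the number of odd-degree vertices by two.

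The core is a charging argument that partitions the edges incident to a fixed vertex $v$ according to the occurrences of $v$ along $W$. Each position $i$ with $0 < i < k$ and $v_i = v$ is an \emph{internal occurrence} of $v$, and along $W$ it is incident to exactly the two edges $e_i, e_{i+1}$, both incident to $v$ in $G$. I would check that the set of edges incident to $v$ is partitioned exactly into these pairs, together with the single edge $e_1$ in case $v_0 = v$ and the single edge $e_k$ in case $v_k = v$: every edge incident to $v$ occurs exactly once in $W$, say as $e_j$, and there $v \in \{v_{j-1}, v_j\}$, so it is charged to precisely one occurrence of $v$. Consequently $\deg(v) = 2 t_v + [v = v_0] + [v = v_k]$, where $t_v$ is the number of internal occurrences of $v$ and $[\cdot]$ is the Iverson bracket (the last two terms may both fire when $W$ is closed). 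Hence $\deg(v)$ is odd only if exactly one of $v = v_0$, $v = v_k$ holds, so only $v_0$ and $v_k$ can have odd degree (and if $v_0 = v_k$, then none does).

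The step needing the most care is the exactness of this edge partition in the multigraph setting: one must confirm that parallel edges and loops at $v$ are each counted exactly once --- a loop occurs once in $W$ between two consecutive appearances of $v$ and contributes $2$ to $\deg(v)$, so it behaves like an internal pair and does not affect parity --- and that the boundary edges $e_1, e_k$ are attributed correctly, in particular not double-counted, when $v$ appears several times along $W$ or when $W$ is closed. Everything beyond this is bookkeeping.
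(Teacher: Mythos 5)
Your argument is correct: the charging of each edge incident to $v$ to exactly one occurrence of $v$ along the Eulerian trail gives $\deg(v)=2t_v+[v=v_0]+[v=v_k]$, so only the trail's endpoints can have odd degree, and your handling of parallel edges and loops (a loop contributing $2$ and hence not affecting parity) is adequate for the multigraph setting. The paper itself states this lemma as a well-known fact and gives no proof, so there is nothing to compare against; your trail-based parity argument is the standard one and fully suffices.
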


The separability of a necklace turns out to be equivalent to the max-cut in its walk graph.

\begin{definition}[Cut]
    In a (multi-)graph $G$ on the vertices $V$, a \emph{cut} is a subset $A\subseteq V$. The \emph{size of a cut} $A$ is the number of edges $\{u,v\}$ in $G$ such that $u\in A$ and $v\not\in A$. The \emph{max-cut}, denoted by $\mu(G)$, is the largest size of any cut $A\subseteq V$. 
\end{definition}
\begin{lemma}
    For every necklace $\Colors$, we have $sep(\Colors)=\mu(G_\Colors)$.
\end{lemma}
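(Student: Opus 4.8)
The plan is to show the two inequalities $sep(\Colors)\le\mu(G_\Colors)$ and $sep(\Colors)\ge\mu(G_\Colors)$ separately, using the correspondence between subsets $A\subseteq\Colors$, cuts of $G_\Colors$, and separator points. Fix a subset $A\subseteq\Colors$ and consider the string $w$ describing $\Colors$. Reading $w$ from left to right, call a position between two consecutive characters a \emph{transition of $A$} if one of the two characters is a color in $A$ and the other is a color in $\Colors\setminus A$. The key observation is that the number of separator points needed to separate $A$ from $\Colors\setminus A$ in the necklace equals exactly the number of transitions of $A$ in $w$: indeed, between two consecutive characters that are both in $A$ (or both in $\overline A$) we never need a separator, and between two characters lying on opposite sides of the cut we need exactly one, since the labels of the intervals $(-\infty,s_1],[s_1,s_2],\dots$ must alternate and each character's component must lie in an interval labelled by its own side. (One has to be a little careful about whether a separator is ``used up'' between non-adjacent but equal-side characters; but since the alternating labels only flip at separator points, the minimum number of separators for this fixed $A$ is precisely the number of sign changes of the side-sequence read along $w$, which is the number of transitions.)

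Next I would relate transitions of $A$ in $w$ to the size of the cut $A$ in $G_\Colors$. By definition of the walk graph, each adjacent pair of distinct characters ``$ab$'' or ``$ba$'' in $w$ contributes one edge $\{a,b\}$ to $G_\Colors$ (with the appropriate multiplicity from summing over all such adjacent pairs), and adjacent equal characters contribute nothing. A given adjacent pair ``$ab$'' is a transition of $A$ if and only if exactly one of $a,b$ is in $A$, which is exactly the condition that the corresponding edge $\{a,b\}$ is cut by $A$. Hence the number of transitions of $A$ in $w$ equals the size of the cut $A$ in $G_\Colors$. Combining with the previous paragraph, for \emph{every} $A\subseteq\Colors$ the minimum number of separator points for $A$ equals the size of the cut $A$ in $G_\Colors$.

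Finally I would take the maximum over all $A$. On the separability side, $sep(\Colors)$ is the smallest $k$ such that \emph{every} $A$ can be separated with $k$ points, i.e.\ $sep(\Colors)=\max_{A\subseteq\Colors}(\text{min separators for }A)$. On the graph side, $\mu(G_\Colors)=\max_{A\subseteq\Colors}(\text{size of cut }A)$. Since the two quantities inside the maxima agree for every $A$, the maxima agree, giving $sep(\Colors)=\mu(G_\Colors)$. The main obstacle I anticipate is making the first paragraph fully rigorous: one must argue carefully that the number of transitions is not only an upper bound but also a lower bound on the number of separator points — that is, that one cannot ``cheat'' by placing separators cleverly inside runs of a single color or between two same-side characters to save a cut. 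The cleanest way is to observe that the labels of the intervals form an alternating sequence, so the label as a function of position on the real line changes sign exactly at the separator points; since every component of a color in $A$ must sit in an $A$-interval and every component of a color in $\overline A$ in an $\overline A$-interval, the label function already changes sign at least once strictly between any two consecutive components lying on opposite sides, and distinct such ``forced'' sign changes require distinct separator points. That yields the matching lower bound and closes the argument.
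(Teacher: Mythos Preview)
Your proposal is correct and takes essentially the same approach as the paper: for each $A\subseteq\Colors$, the minimum number of separator points equals the size of the cut $A$ in $G_\Colors$ (since cut edges correspond one-to-one to the places where the string switches between $A$ and $\overline{A}$), and taking the maximum over $A$ gives the result. Your version is simply more explicit about the lower-bound direction, which the paper's short proof leaves implicit; note also that by construction the string has no two adjacent equal characters, so your remark about ``adjacent equal characters'' is vacuous but harmless.
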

\begin{proof}
For every subset $A\subseteq \Colors$, the number of separator points needed to separate the colors in $A$ from $\Colors\setminus A$ is given by the size of the cut $A$ in $G_C$, since the edges going over this cut correspond one to one to the points in the necklace where the necklace switches from a color in $A$ to a color not in $A$, or vice versa. Thus, the max-cut $\mu(G_\Colors)$ corresponds to the maximal number of separator points we need to separate any two subsets of colors.
\end{proof}

In our proofs we will often show that certain structures or properties do not appear in walk graphs of necklaces with bounded separability. The general strategy for these proofs will be to show that walk graphs with these structures or properties have a large max-cut, and thus the corresponding necklaces cannot have the claimed separability. Our main tool for this is the following bound, originally conjectured by \erdos~\cite{erdosConjecture} and proven by Edwards~\cite{edwards_1,edwards_2}.

\begin{theorem}[Edwards-\erdos bound]
    A simple connected graph $G$ with $n$ vertices and $m$ edges has a maximum cut $\mu(G)$ of at least $\bound(G):=\frac{m}{2}+\frac{n-1}{4}$.
\end{theorem}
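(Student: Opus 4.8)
Since $\mu(G)$ is by definition the largest size of a cut, it suffices to \emph{exhibit} one cut of $G$ with at least $\frac m2+\frac{n-1}{4}$ crossing edges. The standard probabilistic (or greedy) argument already produces a cut of size $\ge\frac m2$ without using connectivity, so the whole task is to squeeze out the extra additive $\frac{n-1}{4}$.

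My plan is to process the vertices in a \emph{connected order} $v_1,\dots,v_n$, i.e. an order in which every $v_i$ with $i\ge 2$ has at least one neighbor among $\{v_1,\dots,v_{i-1}\}$; such an order exists because $G$ is connected (for instance, the visiting order of a breadth-first or depth-first search). Build the cut $(A,B)$ greedily: put $v_1$ on either side, and for each $i\ge 2$ put $v_i$ on whichever side makes at least half of its \emph{back-edges} --- the edges joining $v_i$ to $\{v_1,\dots,v_{i-1}\}$ --- cross. If $d_i$ is the number of back-edges of $v_i$, this crosses at least $\lceil d_i/2\rceil$ of them, and since every edge is counted exactly once, at its endpoint of larger index, we have $\sum_{i\ge 2}d_i=m$. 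Hence the cut we obtain has size at least $\sum_{i\ge 2}\lceil d_i/2\rceil=\frac m2+\frac t2$, where $t:=|\{\,i\ge 2: d_i\text{ odd}\,\}|$. As $t$ is an integer, it therefore suffices to choose the connected order so that $t\ge\lceil (n-1)/2\rceil$.

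The core of the proof --- and the step I expect to be the main obstacle --- is thus the purely combinatorial claim that every connected graph on $n$ vertices has a connected order with at least $\lceil (n-1)/2\rceil$ odd back-degrees. This is tight on the complete graphs $K_{2k+1}$, where every order forces $d_i=i-1$ and hence exactly $k=\lceil (n-1)/2\rceil$ odd back-degrees, and on the Eulerian friendship graphs. I would attack it by growing the current set $S$ greedily, always adding a boundary vertex of \emph{odd} back-degree when one is available: if we are forced to add a vertex $v$ of even back-degree --- which can happen only when every vertex of $\partial S$ currently has even back-degree --- then each neighbor of $v$ outside $S$ picks up an odd back-degree the instant $v$ joins $S$, so we recover on the next step, unless every neighbor of $v$ already lies in $S$; and by connectedness that exception can occur only once $S$ is so large that $V\setminus S$ is an independent set attached entirely to $S$. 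The subtle point --- where the above tightness leaves no room to spare --- is to choose \emph{which} odd-back-degree vertex to add so as never to seal off a large block of even-degree vertices at the very end; making this routing precise is where the real work lies. (Alternatively, one can induct on $n$ by deleting a non-cut vertex of odd degree, or a non-bridge edge; this closes immediately except when every non-cut vertex has even degree, i.e. when all odd-degree vertices are cut vertices, so one is again left with precisely this residual case, which is naturally handled through the block--cut tree.) The statement itself is classical, due to Edwards, and the plan above is a reconstruction of that argument.
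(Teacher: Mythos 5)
The paper itself does not prove this statement: it is quoted as a known result of Edwards (resolving \erdos's conjecture), and the version actually used later is the Poljak--Turz\'{i}k strengthening for weighted/multi-graphs. So there is no in-paper argument to compare against; what matters is whether your reconstruction stands on its own, and as written it does not. Your framework is fine: for a connected order $v_1,\dots,v_n$ the greedy placement yields a cut of size at least $\frac m2+\frac t2$, where $t$ is the number of odd back-degrees, so everything rests on the claim that \emph{some} connected order has $t\ge\lceil (n-1)/2\rceil$. That claim is exactly as strong as the theorem you are trying to prove, and you leave it unproven: the greedy ``always add an odd-back-degree boundary vertex'' sketch explicitly punts on the endgame in which every remaining vertex has even back-degree and all its neighbours already lie inside $S$, where several even additions in a row can occur, and you yourself say that ``making this routing precise is where the real work lies.'' The alternative induction via non-cut vertices and the block--cut tree is likewise only gestured at. A proof plan whose central lemma is acknowledged as the open part is a genuine gap, not a proof.

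To see that the missing step is substantive and not a routine check, note that the conclusion really depends on \emph{which} connected order you take, so no off-the-shelf BFS/DFS order settles it. For the octahedron $K_{2,2,2}$ (parts $\{a_1,a_2\},\{b_1,b_2\},\{c_1,c_2\}$), the connected order $a_1,b_1,c_1,a_2,b_2,c_2$ has back-degrees $1,2,2,3,4$, so $t=2$ and the greedy bound is only $6+1=7$, below the Edwards value $\bound(G)=6+\tfrac54$; the order $a_1,b_1,a_2,c_1,b_2,c_2$ does give $t=4$ and suffices. So the lemma (which I believe is true, and is tight on $K_{2k+1}$ and the friendship graphs, as you note) requires a careful selection argument over orders --- essentially Edwards' original work or the Poljak--Turz\'{i}k spanning-tree induction --- and that argument is precisely what your proposal omits. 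Either carry out the routing/induction in full, or do what the paper does and cite Edwards (or Poljak--Turz\'{i}k, whose statement via minimum spanning trees immediately gives the $\frac{n-1}{4}$ term for connected graphs).
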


Since walk graphs are not simple graphs, we will use a corollary of the following strengthening, due to Poljak and Turzík~\cite{poljakBoundForNonsimple}:
\begin{theorem}[\cite{poljakBoundForNonsimple}]
    For a connected graph $G$ with weight function $w:E\rightarrow\Reals_+$, there exists a cut of weight at least
    \[\frac{\sum_{e\in E}w(e)}{2}+\frac{t(G,w)}{4},\]
    where $t(G,w)$ is the weight of a minimum-weight spanning tree of $G$.
\end{theorem}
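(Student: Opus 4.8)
The plan is to prove the bound by induction on $|V(G)|+|E(G)|$. First I would note that it suffices to treat simple graphs: merging parallel edges leaves $\sum_{e}w(e)$ and the maximum weight of a cut unchanged, while it can only increase $t(G,w)$, so the simple-graph bound implies the statement for arbitrary weighted multigraphs, and in particular for walk graphs. The base case $|V(G)|=1$ is immediate, since then $\sum_{e}w(e)=t(G,w)=0$ and the empty cut works.

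For the inductive step I would peel off a small piece of $G$, recurse, and extend the cut greedily, distinguishing cases by connectivity. If $G$ has a leaf $v$ incident to the edge $e=\{u,v\}$, then $e$ lies in every spanning tree, so $t(G,w)=t(G-v,w)+w(e)$; I recurse on $G-v$ and place $v$ on the side opposite $u$, which adds $w(e)$ to the cut weight and makes the bound for $G$ hold with slack $\frac{w(e)}{4}\geq 0$. If $G$ has no leaf but has a cut vertex $v$, I write $G=G_1\cup G_2$ with $V(G_1)\cap V(G_2)=\{v\}$ and $E(G_1),E(G_2)$ partitioning $E(G)$; here both $\sum_{e}w(e)$ and $t(G,w)$ split additively over $G_1$ and $G_2$, because a minimum spanning tree of $G$ is a disjoint union of minimum spanning trees of the two parts. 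I recurse on each $G_i$, flip one of the two cuts if necessary so that they agree on the side of $v$, and take the union; since every edge of $G$ stays inside one part, the cut weights add and give exactly the claimed bound.

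The case I expect to be the main obstacle is when $G$ is $2$-connected: now there is no leaf or cut vertex to delete, and a short calculation shows that removing a single low-degree vertex or a single edge loses too large a fraction of its weight for the induction to close. Here I would invoke an ear decomposition $G=C_0\cup P_1\cup\cdots\cup P_k$. If $k=0$ then $G$ is a weighted cycle, for which the bound can be checked by hand (an even cycle admits a cut using all of its edges; an odd cycle one using all but the lightest edge, which suffices because an odd cycle has at least three edges). If the last ear $P_k$ has at least two internal vertices, I delete those vertices and the $\geq 3$ path edges, recurse on the still-connected remainder, and re-insert $P_k$ while cutting all of its edges except the lightest; the arithmetic closes precisely because a path with at least three edges carries enough total weight relative to its heaviest edge. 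The genuinely delicate subcases are the short last ears — a single chord, or a path with exactly one internal vertex — where the clean greedy extension falls short; handling these requires either a more global surgery or a dedicated treatment of the small dense graphs that survive the stripping of all short ears (for instance $K_4$). Making this last step work is the technical heart of the argument, and is precisely what Poljak and Turzík supply.
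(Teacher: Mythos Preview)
The paper does not prove this theorem; it is quoted verbatim as a result of Poljak and Turz\'{i}k and then used only to derive \Cref{cor:multigraphErdos}. There is therefore no proof in the paper to compare your proposal against.

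As a standalone argument your sketch is a plausible outline of an inductive proof, and you are candid about where it stops: the short-ear subcases in the $2$-connected case are exactly where the real work lies, and you explicitly defer those to the original source. So by your own account the proposal is not a complete proof. Two minor remarks on the parts you do spell out. First, your reduction to simple graphs is sound, but note why: merging parallel edges of weights $w_1,\ldots,w_r$ into one edge of weight $w_1+\cdots+w_r$ replaces the minimum-spanning-tree contribution $\min_i w_i$ by $\sum_i w_i$, which is why $t(G,w)$ can only go up. Second, in the odd-cycle check you need the inequality $W+w_{\max}\geq 4w_{\min}$, which follows from $W\geq 3w_{\min}$ and $w_{\max}\geq w_{\min}$; this is implicit in ``has at least three edges'' but worth stating.
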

\begin{corollary}\label{cor:multigraphErdos}
A connected (multi-)graph $G$ with $n$ vertices and $m$ edges has a maximum cut $\mu(G)$ of at least $\bound(G):=\frac{m}{2}+\frac{n-1}{4}$.
\end{corollary}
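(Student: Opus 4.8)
The plan is to deduce the statement from the Poljak--Turzík theorem by passing from the multigraph to an associated weighted simple graph. Given a connected loopless multigraph $G$ on $n$ vertices with $m$ edges, I would let $G'$ be the simple graph on the same vertex set whose edge set consists of all pairs $\{u,v\}$ that are adjacent in $G$, and equip each such edge with weight $w(\{u,v\})$ equal to its multiplicity in $G$. Then $w$ takes values in the positive integers, $G'$ is connected because $G$ is, and $\sum_{e\in E(G')} w(e)=m$, since grouping the edges of $G$ by their endpoint pair exactly partitions them according to $w$.

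Next I would bound the minimum spanning tree weight from below: every spanning tree of $G'$ has exactly $n-1$ edges, each of weight at least $1$, so $t(G',w)\geq n-1$. Plugging this into the Poljak--Turzík theorem applied to $(G',w)$ produces a cut $A\subseteq V$ of weight at least $\frac{\sum_{e\in E(G')}w(e)}{2}+\frac{t(G',w)}{4}\geq \frac{m}{2}+\frac{n-1}{4}=\bound(G)$.

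Finally I would translate the bound back to $G$: for any $A\subseteq V$, the total weight of the edges of $G'$ crossing $A$ equals, by the definition of $w$, the number of edges of $G$ (counted with multiplicity) crossing $A$ --- i.e.\ exactly the size of the cut $A$ in the multigraph $G$. Hence $G$ has a cut of size at least $\bound(G)$, which is precisely the claim $\mu(G)\geq\bound(G)$.

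I do not expect any genuine obstacle here; the only points requiring a little care are that the weight function is strictly positive (so that $t(G',w)\geq n-1$ rather than merely $>0$, which holds since every multiplicity is at least $1$) and the mild assumption that $G$ has no loops --- necessary since a loop would be counted in $m$ but could never cross a cut --- which is harmless for our purposes because the corollary is only ever applied to walk graphs, and those are loopless by construction.
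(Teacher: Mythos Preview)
Your argument is correct and is exactly the intended derivation: the paper states the corollary immediately after the Poljak--Turz\'ik theorem without further proof, and the natural way to obtain it is precisely what you describe---view the multigraph as a weighted simple graph with integer multiplicities, use that any spanning tree has weight at least $n-1$, and observe that weighted cut size equals multigraph cut size. Your remark about looplessness is apt and consistent with the paper's use of the corollary only for walk graphs.
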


For determining the separability of a necklace, we will use an algorithm due to Crowston, Jones and Mnich~\cite{blackboxFPT} to decide max-cut beyond the Edwards-\erdos bound.

\begin{theorem}[FPT algorithm \cite{blackboxFPT}]
\label{thm:blackbox}
There exists an algorithm that decides whether for a given simple connected graph $G$ with \dimension vertices and $m$ edges the max-cut $\mu(G)$ is at most
$\bound(G)+ \const$
in time $2^{O(\const)}\cdot \dimension^4$.
\end{theorem}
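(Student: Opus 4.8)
The plan is to treat this as an \emph{above-guarantee} parameterized problem: set the \emph{excess} $\epsilon(G):=\mu(G)-\bound(G)$, which by \cref{cor:multigraphErdos} applied to the simple connected graph $G$ satisfies $\epsilon(G)\ge 0$, and decide whether $\epsilon(G)\le\const$. Since $\bound(G)=\frac{2m+n-1}{4}$ and $\mu(G)$ is an integer, the excess is always an integer multiple of $\tfrac14$; in particular the smallest positive excess is exactly $\tfrac14$. The overall strategy is to kernelize: reduce $G$ to an equivalent instance on $O(\const)$ vertices and then compute the max-cut of the kernel by brute force over all bipartitions, which costs $2^{O(\const)}$.

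First I would decompose $G$ into its $2$-connected blocks via the block-cut tree, and show that both quantities in the excess are additive over blocks. For a connected graph the block decomposition satisfies $\sum_B |E(B)|=m$ and $\sum_B(|V(B)|-1)=n-1$, whence $\bound(G)=\sum_B\bound(B)$. Max-cut is likewise additive, $\mu(G)=\sum_B\mu(B)$: given optimal bipartitions of two blocks sharing a cut vertex, one may globally flip the colors of one block (which preserves its cut size) so that the shared vertex agrees, and then merge them; conversely any cut of $G$ restricts to cuts of the blocks. Consequently the excess is additive, $\epsilon(G)=\sum_B\epsilon(B)$ with every $\epsilon(B)\ge0$.

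Next I would classify blocks by their individual excess. A direct computation gives $\epsilon(K_t)=0$ when $t$ is odd and $\epsilon(K_t)=\tfrac14$ when $t$ is even, using $\mu(K_t)=\lfloor t^2/4\rfloor$ and $\bound(K_t)=\frac{t^2-1}{4}$; a bridge $K_2$ also has excess $\tfrac14$. The key combinatorial lemma is that \emph{every block that is not an odd clique has excess at least $\tfrac14$}; combined with additivity and the quantization of $\epsilon$ in units of $\tfrac14$, this shows that a YES-instance ($\epsilon(G)\le\const$) has at most $4\const$ blocks of positive excess. The remaining blocks are odd cliques of excess $0$, which consume no budget but may be large and numerous; these are exactly the blocks that must be eliminated. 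I would therefore design safeness-preserving reduction rules that collapse long chains and large odd-clique blocks --- for instance shrinking an odd-clique block attached to the rest of the graph at a single cut vertex, and straightening maximal paths of such blocks --- each rule preserving connectivity, simplicity, and the value of $\epsilon(G)$. Proving that exhaustive application of these rules yields an equivalent instance with $O(\const)$ vertices is the technical heart of the argument, and the step I expect to be the main obstacle: the rules must correctly handle blocks glued at two cut vertices and the interaction of cliques along the block-cut tree, and the kernel bound requires charging every surviving vertex against the at most $4\const$ positive-excess blocks.

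Finally, on the reduced instance with $O(\const)$ vertices I would compute $\mu$ exactly by enumerating all $2^{O(\const)}$ bipartitions and comparing against $\bound+\const$; translating this answer back through the reduction rules settles the original instance. Building the block-cut tree and applying each reduction rule takes $n^{O(1)}$ time, and bounding the number of rule applications yields the claimed overall running time $2^{O(\const)}\cdot n^4$.
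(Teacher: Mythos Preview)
This theorem is not proved in the paper at all: it is stated with the citation \cite{blackboxFPT} and used as a black box (hence the label \texttt{thm:blackbox}). There is therefore no ``paper's own proof'' to compare against; the result is due to Crowston, Jones and Mnich, and the present paper only invokes it in the proof of \Cref{thm:ourFPT}.

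That said, your sketch is headed in the right direction and is broadly in the spirit of the actual Crowston--Jones--Mnich argument: the additivity of both $\mu$ and $\bound$ over the block decomposition is correct and is indeed the starting point, and the observation that odd cliques are exactly the $2$-connected graphs meeting the Edwards--\erdos bound with equality is the relevant structural fact. Where your outline remains a genuine plan rather than a proof is precisely where you flag it yourself: the reduction rules that collapse odd-clique blocks while preserving simplicity, connectivity, and $\epsilon$ are not specified, and the linear kernel bound (charging every surviving vertex to one of at most $4\const$ positive-excess blocks) is asserted rather than established. In the cited work this is handled by a sequence of carefully proven one-way reduction rules together with a structural theorem about graphs whose excess over $\bound$ is small; getting those rules to interact correctly, in particular at cut vertices shared by several clique blocks, is the nontrivial content of that paper. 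Your proposal is thus a reasonable high-level roadmap to the cited result, but it does not yet constitute a proof, and in any case the present paper makes no attempt to supply one.
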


This is a so-called fixed-parameter algorithm; for any \emph{fixed} parameter $k$, the algorithm runs in polynomial time (in $n$). Note again that this algorithm only works on simple graphs, thus, we will need to alter the walk graphs to be able to apply this algorithm.

\section{An FPT Algorithm for \TwoThiefNecklaceSplitting}

In this section we show \Cref{thm:polytimeNecklaceSplitting_(n-1+l)-separable}:

\polytimeNecklaceSplitting*

The algorithm we use is recursive, based on the following crucial observation.

\begin{theorem}
\label{thm:intervalOrTwoComponents}
Let \Colors be an $(\dimension -1 + \constB)$-separable necklace with \dimension colors. If $\dimension\geq 6 \constB +2$ there must exist
\begin{itemize}
    \item[(i)] two neighboring colors that are both intervals, or
    \item[(ii)] one color that only consists of exactly two components.
\end{itemize}
\end{theorem}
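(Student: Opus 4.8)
The plan is to reason entirely in the walk graph $G_\Colors$, using $sep(\Colors)=\mu(G_\Colors)$ and that $G_\Colors$ is connected and semi-Eulerian (\Cref{obs:semiEulerian}). Suppose for contradiction that neither (i) nor (ii) holds, i.e.\ no color consists of exactly two components and no two neighboring colors are both intervals. Let $a$ denote the number of intervals (colors with exactly one component) and $b=\dimension-a$ the number of remaining colors; since $G_\Colors$ is connected on $\dimension\ge 2$ vertices, $b\ge 1$. The assumption translates into two structural facts about $G_\Colors$: (a) the $a$ interval vertices form an independent set (two intervals are joined by an edge exactly when they are neighboring colors), and (b) every non-interval color has at least three components.

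From (a) and (b) I would extract two lower bounds on $\mu(G_\Colors)$ and then combine them. For the first, consider the cut that places the interval vertices on one side and all other vertices on the other side. By (a) no edge lies inside the interval side, so the size of this cut equals the number of edges incident to intervals, namely $\sum_{c\text{ interval}}\deg(c)$. Every interval has degree at most $2$ (it occurs once in the string), and by \Cref{lem:odddegrees} at most two vertices of $G_\Colors$ have odd degree, so at most two intervals have degree $1$; hence $\mu(G_\Colors)\ge 2a-2$. For the second, by (b) the string describing $\Colors$ has at least $a+3b$ characters, so $G_\Colors$ has $m\ge a+3b-1$ edges (the number of edges of a walk graph is the length of the string minus one), and \Cref{cor:multigraphErdos} gives $\mu(G_\Colors)\ge\tfrac{m}{2}+\tfrac{\dimension-1}{4}\ge\tfrac{3a+7b-3}{4}$.

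Combining the two bounds with weights $\tfrac13$ and $\tfrac23$ yields
\[
\mu(G_\Colors)\ \ge\ \tfrac13\,(2a-2)\ +\ \tfrac23\cdot\tfrac{3a+7b-3}{4}\ =\ \tfrac{7a+7b-7}{6}\ =\ \tfrac{7(\dimension-1)}{6}.
\]
On the other hand, $(\dimension-1+\constB)$-separability gives $\mu(G_\Colors)=sep(\Colors)\le\dimension-1+\constB$, and $\dimension\ge 6\constB+2$ gives $\constB\le\tfrac{\dimension-2}{6}$, hence $\mu(G_\Colors)\le\dimension-1+\tfrac{\dimension-2}{6}=\tfrac{7\dimension-8}{6}<\tfrac{7(\dimension-1)}{6}$, a contradiction.

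The step I expect to be the crux is picking the two lower bounds: the Edwards-\erdos bound alone is exactly too weak (it only yields $\mu(G_\Colors)\ge\tfrac{3a+7b-3}{4}$, which fails to beat $\tfrac{7\dimension-8}{6}$ when $a$ is large relative to $b$), and the idea is to pair it with the ``intervals versus the rest'' cut, which is precisely where the hypothesis ``no two neighboring intervals'' is used, while ``no color has exactly two components'' feeds the edge count $m\ge a+3b-1$; the chosen weights $\tfrac13,\tfrac23$ are exactly what makes the resulting bound $\tfrac{7(\dimension-1)}{6}$ land just above $\tfrac{7\dimension-8}{6}$, which is where the constant $6$ in the hypothesis comes from. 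The remainder is bookkeeping — one should in particular also check the degenerate case $a=0$, where only the Edwards-\erdos bound contributes and still gives $\mu(G_\Colors)\ge\tfrac{7\dimension-3}{4}>\tfrac{7(\dimension-1)}{6}$.
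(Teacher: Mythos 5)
Your proof is correct and takes essentially the same route as the paper: both arguments use the cut consisting of all interval colors (giving $\mu(G_\Colors)\ge 2a-2$ for $a$ intervals), the edge/degree count coming from every non-interval color having at least three components, and \Cref{cor:multigraphErdos}, played off against $\mu(G_\Colors)=sep(\Colors)\le \dimension-1+\constB$ under $\dimension\ge 6\constB+2$. The only difference is bookkeeping: the paper first uses the interval cut to bound the number of intervals by $\frac{\dimension+1+\constB}{2}$ and substitutes this into the degree sum before applying the Edwards--\erdos bound, whereas you keep both lower bounds on $\mu(G_\Colors)$ and combine them as a convex combination with weights $\frac13$ and $\frac23$ --- an equivalent way of organizing the same estimate.
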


\begin{proof}
Since the walk graph is semi-Eulerian, it contains either $0$ or $2$ vertices with odd degree (recall \Cref{obs:semiEulerian,lem:odddegrees}). A color that is an interval has degree $2$, unless it is at the beginning or end of the necklace. A color that consists of more than two components has degree at least $6$ (or $5$ or $4$ if it is at the beginning and/or end of the necklace).

Let $A\subseteq C$ be the set of intervals. Note that if no two intervals are neighboring, we can pick all the intervals as a cut $A$, which has size at least $\mu(A) \geq 2\abs{A}-2$. Since we know that $\mu(G_C)\leq \dimension-1+\constB$, we must have that $\abs{A}\leq \frac{\dimension +1 +\constB}{2}$.

Assume that the theorem does not hold, and that there thus exist no neighboring intervals and no color consisting of exactly two components. We can then bound the sum of degrees 
$\sum_{c\in \Colors}deg(c)
\geq 2\cdot \frac{\dimension +1 +\constB}{2}
+ 6\cdot (\dimension - \frac{\dimension +1 +\constB}{2}) - 2 
= 4\dimension - 2 \constB - 4$. 
Thus, the number of edges $\abs{E}$ in $G_C$ is bounded $\abs{E}\geq \frac{ 4\dimension - 2 \constB - 4}{2}=2\dimension -\constB -2$. 
Due to \Cref{cor:multigraphErdos} we thus get that 
$\mu(G_C)\geq \frac{2 \dimension -\constB - 2}{2}+\frac{\dimension-1}{4}=\frac{5}{4}\dimension- \frac{\constB}{2} - \frac{5}{4}$. 
By the assumption $\dimension\geq 6\constB+2$, we therefore have $\mu(G_C)\geq \dimension - \frac{3}{4} + \constB$, which is a contradiction to the assumption that $\mu(G_C)\leq \dimension-1+\constB$. Thus, the theorem follows.
\end{proof}

To use  \cref{thm:intervalOrTwoComponents} to recursively solve smaller instances, we need to make sure that the separability of the smaller instances translates back to the separability of the original instance. The following two lemmas provide this necessary correspondence.

\begin{lemma}
\label{lem:SeparabilityWhenRemovingInterval}
Let \Colors be a necklace. Let $\Colors'$ be the necklace obtained by removing two neighboring intervals $c,c'$ from \Colors. Then, $sep(\Colors')= sep(\Colors)-2$.
\end{lemma}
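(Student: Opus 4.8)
The plan is to work entirely in the walk-graph picture, using the identity $sep(\Colors)=\mu(G_\Colors)$ established earlier. Removing two neighboring intervals $c,c'$ from the necklace changes the string by deleting a substring that locally looks like ``$x\,c\,c'\,y$'' (or a boundary variant where $x$ or $y$ is absent) and replacing it with ``$x\,y$''. So I would first describe precisely how $G_{\Colors'}$ is obtained from $G_\Colors$: since $c$ is an interval it has degree exactly $2$ in $G_\Colors$ (degree $1$ if it sits at an end of the necklace), with its two incident edges going to its string-neighbors; likewise for $c'$; and $c,c'$ are joined by exactly one edge between them. Deleting $c$ and $c'$ removes these (at most) three edges and adds back exactly one edge $\{x,y\}$ joining the two former outer neighbors (or adds nothing, if $c,c'$ occupy one whole end of the necklace). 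The key structural fact is that $c$ has degree $2$ with \emph{distinct} neighbors is not guaranteed — its two neighbors could coincide — but in every case $G_{\Colors'}$ is exactly $G_\Colors$ with vertices $c,c'$ contracted/deleted along this short path and one edge re-routed.

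Given that description, the lemma reduces to a statement about max-cut: $\mu(G_{\Colors'}) = \mu(G_\Colors) - 2$. For the inequality $\mu(G_{\Colors'})\geq \mu(G_\Colors)-2$, I would take an optimal cut $A$ of $G_\Colors$, restrict it to $V(G_\Colors')$, and argue that deleting the path through $c$ and $c'$ loses at most $2$ cut edges while the re-routed edge $\{x,y\}$ can only help; more carefully, I would argue it is always possible to \emph{extend} an optimal cut of $G_{\Colors'}$ to a cut of $G_\Colors$ of size exactly $2$ larger by placing $c$ and $c'$ greedily (each of the two degree-$2$ vertices can be placed to cut both of its incident edges precisely because its two neighbors are consecutive on a short path and we have the freedom to choose), which simultaneously gives $\mu(G_\Colors)\geq \mu(G_{\Colors'})+2$. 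The matching upper bound $\mu(G_\Colors)\leq \mu(G_{\Colors'})+2$ follows because any cut of $G_\Colors$ induces a cut of $G_{\Colors'}$ after possibly adjusting the contribution of the re-routed edge, and the total number of edges incident to $\{c,c'\}$ that are not $\{x,y\}$-replaceable is at most $2$.

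The main obstacle I anticipate is the careful case analysis around the re-routed edge $\{x,y\}$ and around degenerate configurations: when $x=y$ (the two outer neighbors coincide), when $c$ or $c'$ lies at an end of the necklace so it has degree $1$, and when $\{x,y\}$ already exists as an edge of $G_\Colors$ so that removing the path creates a parallel edge or, conversely, when the deletion would disconnect the graph — one must check that $G_{\Colors'}$ is still connected and semi-Eulerian so that the string-to-graph correspondence is still valid (here it helps that $G_{\Colors'}$ is literally the walk graph of the shorter string, so connectivity is automatic). I would handle these by noting that in each boundary/degenerate case the count ``two incident non-replaceable edges'' still holds, so the $\pm 2$ bookkeeping is unaffected, and then combine the two inequalities to conclude $sep(\Colors') = \mu(G_{\Colors'}) = \mu(G_\Colors) - 2 = sep(\Colors)-2$.
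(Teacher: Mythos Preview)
Your plan is correct and follows essentially the same route as the paper: translate to the walk graph via $sep(\Colors)=\mu(G_\Colors)$, describe the operation as replacing the length-$3$ path $(x,c,c',y)$ by the edge $\{x,y\}$, and prove $\mu(G_{\Colors'})=\mu(G_\Colors)-2$ by extending a cut of $G_{\Colors'}$ (place $c$ opposite $x$ and $c'$ opposite $y$) and by restricting a cut of $G_\Colors$. You are in fact more careful than the paper about the degenerate cases ($x=y$, a boundary interval of degree $1$, parallel edges); the paper silently assumes the generic configuration, but your observation that the $\pm 2$ bookkeeping survives these cases is correct and worth keeping.
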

\begin{proof}
    In the walk graph, removing two neighboring intervals corresponds to replacing a path $(a,c,c',b)$ of length $3$ by a direct edge connecting $a$ and $b$.
    
    Every cut $A'\subseteq \Colors'$ in $G_{\Colors'}$ of size $k$ can be extended to a cut $A\subseteq\Colors$ in $G_\Colors$ of size $k+2$: For every vertex $v\in \Colors'$, we have $v\in A'$ iff $v\in A$. Furthermore, $c\in A$ iff $a\not\in A'$ and $c'\in A$ iff $b\not\in A'$. Thus, $sep(C)\geq sep(C')+2$. 

    Similarly, every cut $A\subseteq C$ in $G_C$ of size $k$ induces a cut $A'=A\cap C'$ of size $k-2$ in $G_{C'}$. Thus, $sep(C')\geq sep(C)-2$, and we get $sep(C')=sep(C)-2$.
\end{proof}

\begin{lemma}
\label{lem:SeparabilityWhenCollapsingTwoComponents}
Let \Colors be a necklace on \dimension colors that is $(\dimension-1+\constB)$-separable. The necklace $\Colors'$ obtained by reducing a color $c\in\Colors$ to a subset $\emptyset\subset c'\subset c$ is still $(\dimension-1+\constB)$-separable. 
\end{lemma}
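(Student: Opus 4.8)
The plan is to argue directly at the level of separator points rather than via walk graphs, exploiting the fact that the underlying point set of $\Colors'$ is contained in that of $\Colors$. Since $\emptyset\subset c'$, the color $c'$ is nonempty, so $\Colors'$ still has exactly $\dimension$ colors, which are in natural bijection with the colors of $\Colors$ (only $c$ is replaced by its subset $c'$). Thus it suffices to show: for every $A'\subseteq\Colors'$ there exist $\dimension-1+\constB$ separator points separating $A'$ from $\Colors'\setminus A'$.

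First I would fix an arbitrary $A'\subseteq\Colors'$ and let $A\subseteq\Colors$ be the corresponding subset of colors (with $c'$ replaced by $c$, if $c'\in A'$). By $(\dimension-1+\constB)$-separability of $\Colors$, there are separator points $s_1<\dots<s_{\dimension-1+\constB}$ witnessing the separation of $A$ from $\Colors\setminus A$. I claim that the very same points, with the same alternating labelling of the induced intervals, witness the separation of $A'$ from $\Colors'\setminus A'$. The key observation is monotonicity: since the only color that changed is $c$, and $c'\subset c$, we have $\bigcup_{d\in A'}d\subseteq\bigcup_{d\in A}d$ and $\bigcup_{d\in\Colors'\setminus A'}d\subseteq\bigcup_{d\in\Colors\setminus A}d$. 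Consequently each emptiness condition of the form $I\cap\bigcup_{d\in\Colors\setminus A}d=\emptyset$ (for an interval $I$ labelled $A$) or $I'\cap\bigcup_{d\in A}d=\emptyset$ (for an interval $I'$ labelled $\overline A$) that holds for $\Colors$ forces the corresponding condition for $\Colors'$, because we are intersecting the same interval $I$ (resp.\ $I'$) with a smaller set of points.

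Carrying this out cleanly needs a two-line case split on whether $c'\in A'$: in one case the side not containing $c'$ is literally the same set of colors as for $\Colors$, and in the other case it is that same set with $c$ shrunk to $c'$; either way both emptiness conditions are inherited. Since $A'$ was arbitrary, this proves $\Colors'$ is $(\dimension-1+\constB)$-separable.

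I do not anticipate a real obstacle. The one point worth flagging is that, in contrast with \Cref{lem:SeparabilityWhenRemovingInterval}, we cannot hope for equality here: reducing a color can strictly decrease the separability, and in walk-graph terms it can even create new edges between colors that were previously separated only by components of $c$, so an argument running directly through $\mu(G_{\Colors'})$ would be more delicate. Working with separator points sidesteps this entirely, since deleting necklace points can only shrink every intersection that needs to be empty.
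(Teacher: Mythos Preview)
Your proof is correct and follows exactly the same underlying idea as the paper: removing points from a necklace can only make every separation condition easier to satisfy, so the separability cannot increase. The paper states this in a single sentence (``By simplifying a necklace, we can not increase its separability''), whereas you have spelled out the separator-point argument in full detail; your version is a faithful expansion of that one-line observation.
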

\begin{proof}
    By simplifying a necklace, we can not increase its separability.
\end{proof}

We are now ready to present \cref{alg:solveNecklaceSplitting}, an FTP algorithm to solve \TwoThiefNecklaceSplitting on $(\dimension-1+\constB)$-separable necklaces.
The strategy is to reduce the given necklace either by removing two neighboring intervals, or by removing one of the two components in a color that consists of exactly two components. By \Cref{lem:SeparabilityWhenRemovingInterval,lem:SeparabilityWhenCollapsingTwoComponents}, if \Colors is $(\dimension -1 + \constB)$-separable, the resulting necklace $\Colors'$ is again $(\dimension' -1 + \constB)$-separable (for $n'=|C'|$), and can thus be solved recursively. The solution of the reduced case is then extended back to a solution of the original necklace. A necklace can be reduced as long as \Cref{thm:intervalOrTwoComponents} applies, and thus we only need to solve the case $\dimension<6\constB + 2$ directly.

For an example of the execution of the algorithm, see \Cref{fig:algorun1} and \Cref{fig:algorun2}. Note that these small instances would technically be solved by brute-force and merely serve as illustrations.

\begin{algorithm}[h!]
\caption{\textsc{RecursiveNS}}\label{alg:solveNecklaceSplitting}
\begin{algorithmic}[1]
\Statex{\textbf{Input:} An $(\dimension -1 + \constB)$-separable necklace \Colors with \dimension colors.}
\Statex{\textbf{Output:} \dimension split points.}

\If{$\dimension < 6\constB + 2$}
    \State{$Q\gets $ \textsc{BruteForce}($C$)}\label{alg:solveNecklaceSplitting:bruteforce}
    \State \Return{$Q$}
\ElsIf{there exist two neighboring intervals $c,c'\in C$}
    \State{$Q\gets$ \textsc{RecursiveNS}($C\setminus\{c,c'\}$)}
    \State \Return{$Q\cup\{median(c),median(c')\}$}\label{alg:solveNecklaceSplitting:return1}
\Else
    \State{$c\gets$ a color consisting of two components $c_1,c_2$}\label{alg:solveNecklaceSplitting:findTwoComponent}
    \State{$c'\gets $ largest component of $c$}

    \If{$|c'|$ is even} \label{alg:solveNecklaceSplitting:median1}
        \State{Add a median point to $c'$} \label{alg:solveNecklaceSplitting:median2}
    \EndIf
    \State{$Q\gets$ \textsc{RecursiveNS}($(C\setminus \{c\})\cup \{c'\}$)}
    \State{$\{q\}\gets Q\cap c'$}
    \State{$q'\gets q$ shifted right/left by $\lceil\frac{min(|c_1|,|c_2|)}{2}\rceil$ points of $c'$}\label{alg:solveNecklaceSplitting:adjustSplit} \Comment{direction depending on parity of number of split points in $Q$ between $c_1$ and $c_2$}
    \State\Return{$Q\setminus\{q\}\cup\{q'\}$}\label{alg:solveNecklaceSplitting:return2}
\EndIf

\end{algorithmic}
\end{algorithm}

\begin{proof}[Proof of \Cref{thm:polytimeNecklaceSplitting_(n-1+l)-separable}]
We first argue for correctness of \Cref{alg:solveNecklaceSplitting}. 
By \Cref{thm:intervalOrTwoComponents}, if we reach line \ref{alg:solveNecklaceSplitting:findTwoComponent} we can always find a color which consist out of exactly two components, so the algorithm can never fail to finish.

We have to argue that our algorithm returns a correct solution in both line \ref{alg:solveNecklaceSplitting:return1} and line \ref{alg:solveNecklaceSplitting:return2}.
\begin{itemize}
\item[(i)] Line \ref{alg:solveNecklaceSplitting:return1}: The constructed solution splits the two neighboring intervals correctly. Since we place two splits, the parity of the partition outside of these intervals does not change in comparison to the solution $Q$ obtained recursively. Thus, all other colors are also split correctly.

\item[(ii)] Line \ref{alg:solveNecklaceSplitting:return2}: The constructed solution splits color $c$ correctly, and $q'$ lies in the same component of $c$ as $q$, since $c'$ is the larger of the two components. Shifting the split within the same component of $c$ does not change the partition outside of this component in comparison to the solution $Q$ obtained recursively. Thus, all other colors are also split correctly.
\end{itemize}

It remains to argue for the runtime of \Cref{alg:solveNecklaceSplitting}. Clearly, we only use the brute-force approach at line \ref{alg:solveNecklaceSplitting:bruteforce} once. In an $(\dimension-1+\constB)$-separable necklace with $\dimension<6\constB+2$, each color has at most $O(\constB)$ components. For each guess of one component per color, it can be determined in polynomial time in $\constB$ whether this guess admits a solution. There are at most $\constB^{O(\constB)}$ guesses, thus we can solve this base case in time $2^{O(\constB\log\constB)}$.

In the rest of the algorithm, on each level of the recursion we reduce the number of points in the necklace by at least one, and we can make the necessary adjustments and find the needed colors in linear time in the number of points. Thus, the total runtime of the algorithm is $2^{O(\constB\log\constB)}+(\sum_{c\in\Colors}|c|)^2$, as claimed.
\end{proof}

\begin{figure}[h!]
    \begin{subfigure}{0.3\textwidth}
        \centering
        \includegraphics[page=1]{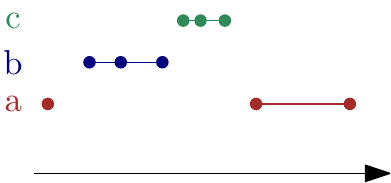}
        \caption{Original necklace.}
    \end{subfigure}
    \hfill
    \begin{subfigure}{0.3\textwidth}
        \centering
        \includegraphics[page=2]{figs/algorun1.pdf}
        \caption{Reduced necklace.}
    \end{subfigure}
    \hfill
    \begin{subfigure}{0.3\textwidth}
        \centering
        \includegraphics[page=3]{figs/algorun1.pdf}
        \caption{Solution.}
    \end{subfigure}
    \caption{Example step of \Cref{alg:solveNecklaceSplitting} using the reduction of removing two neighboring intervals (\nodeB and \nodeC).}\label{fig:algorun1}
\end{figure}

\begin{figure}[h!]
    \begin{subfigure}{0.3\textwidth}
        \centering
        \includegraphics[page=1]{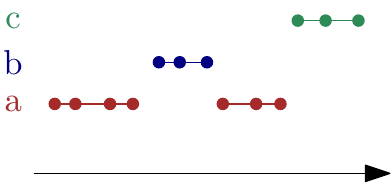}
        \caption{Original necklace.}
    \end{subfigure}
    \hfill
    \begin{subfigure}{0.3\textwidth}
        \centering
        \includegraphics[page=2]{figs/algorun2.pdf}
        \caption{Reduced necklace.}
    \end{subfigure}
    \hfill
    \begin{subfigure}{0.3\textwidth}
        \centering
        \includegraphics[page=3]{figs/algorun2.pdf}
        \caption{Solution.}
    \end{subfigure}
    \caption{Example step of \Cref{alg:solveNecklaceSplitting} using the reduction of removing a component from the two-component color \nodeA.}\label{fig:algorun2}
\end{figure}

For the special case of \dimension-separable necklaces, i.e., $\constB = 1$, we get the following corollary:
\begin{corollary}
\label{cor:polytimeNecklaceSplitting_n-separable}
Finding the unique solution for \TwoThiefNecklaceSplitting on an $\dimension$-separable necklace with \dimension colors takes polynomial time.
\end{corollary}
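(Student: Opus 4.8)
The plan is to obtain this as the $\constB=1$ specialization of \Cref{thm:polytimeNecklaceSplitting_(n-1+l)-separable}, combined with the uniqueness guarantee of \Cref{thm:uniquenessOfSolution}; essentially no new work is required.

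First I would instantiate \Cref{thm:polytimeNecklaceSplitting_(n-1+l)-separable} with $\constB=1$. A necklace with \dimension colors is $(\dimension-1+1)$-separable exactly when it is \dimension-separable, so the theorem applies and \Cref{alg:solveNecklaceSplitting} solves \TwoThiefNecklaceSplitting in time $2^{O(1\cdot\log 1)}+(\sum_{c\in\Colors}|c|)^2 = O\bigl((\sum_{c\in\Colors}|c|)^2\bigr)$. Since $\sum_{c\in\Colors}|c|$ is a lower bound on the size of the input, this is polynomial in the input size. For the word ``unique'' in the statement, I would simply invoke \Cref{thm:uniquenessOfSolution}: an \dimension-separable necklace with \dimension colors has a unique solution to \TwoThiefNecklaceSplitting, so the split points output by the algorithm are necessarily that unique solution.

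The only points worth double-checking — and the only places I expect any (very minor) friction — are that $\constB=1$ is a meaningful parameter value and that the $2^{O(\constB\log\constB)}$ base-case term really collapses to a constant here. The former holds because every necklace with \dimension colors satisfies $sep(\Colors)\ge \dimension-1$, so \dimension-separability is precisely the first nontrivial separability regime. For the latter, when the recursion of \Cref{alg:solveNecklaceSplitting} reaches $\dimension<6\constB+2=8$, the instance is an \dimension-separable necklace in which each color has only $O(\constB)=O(1)$ components — otherwise the cut isolating that single color would already exceed separability \dimension — so the brute-force step runs over constantly many guesses in constant time, consistent with $2^{O(\constB\log\constB)}$ evaluating to a constant. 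No genuine obstacle remains; the statement is a clean corollary.
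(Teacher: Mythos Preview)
Your proposal is correct and matches the paper's approach exactly: the corollary is stated immediately after \Cref{thm:polytimeNecklaceSplitting_(n-1+l)-separable} as its $\constB=1$ specialization, with no additional argument given. Your extra sanity checks (that $\constB=1$ is the first nontrivial regime and that the brute-force base case collapses to constant work) are valid and go slightly beyond what the paper spells out, but the underlying derivation is identical.
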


Until now, both \cref{thm:polytimeNecklaceSplitting_(n-1+l)-separable} and \cref{cor:polytimeNecklaceSplitting_n-separable} work under the initial promise that \Colors is $(\dimension-1+\constB)$-separable (or \dimension-separable respectively). If the algorithm fails because none of the cases applies, this certifies that the input necklace was \emph{not} $(\dimension-1+\constB)$-separable. On the other hand, \Cref{alg:solveNecklaceSplitting} may run successfully, even if the input necklace is not $(\dimension-1+\constB)$-separable, and if it does run successfully, its output is always a correct solution. Since \Cref{alg:solveNecklaceSplitting} can produce these ``false positives'', it cannot be used to decide $(\dimension-1+\constB)$-separability. We tackle that problem in the next section.

\section{Testing Separability}

At first, it seems like finding a polynomial-time algorithm for deciding whether a necklace is $(\dimension-1+\constB)$-separable may be futile, since we have the following theorem due to Guruswami~\cite{maxCutEulerian}:

\begin{theorem}[\cite{maxCutEulerian}]
Given a Eulerian graph $G$ and an integer $\sep$, deciding whether the size of the max-cut $\mu(G)\geq \sep$ is \NP-complete.
\end{theorem}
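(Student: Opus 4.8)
The plan is to prove \NP-hardness by a reduction from \textsc{Max-Cut}, which is \NP-complete on general graphs (and, more conveniently, remains \NP-complete already on cubic graphs); membership in \NP is immediate, since a cut of the prescribed size is a polynomial-size certificate. The only genuine difficulty is that an arbitrary graph need not be Eulerian, so the reduction must ``repair'' the odd-degree vertices without distorting the maximum cut in an uncontrolled way.

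Here is the construction I would use. Let $G$ be a connected instance of \textsc{Max-Cut}; we may assume $G$ has at least one vertex of odd degree (automatic for cubic graphs, or arrange it by attaching a single pendant vertex, which increases the optimum by exactly $1$). Let $O$ be the set of odd-degree vertices of $G$, so $|O|$ is even and nonempty. Form $G''$ by taking two vertex-disjoint copies $G_1,G_2$ of $G$ and, for every $o\in O$, adding the edge $\{o^{(1)},o^{(2)}\}$ joining the two copies of $o$. Every originally even-degree vertex still has even degree, and every $o\in O$ gains exactly one incident edge, so all degrees of $G''$ are even; since $G$ is connected and $O\neq\emptyset$, at least one cross edge links $G_1$ to $G_2$, so $G''$ is connected. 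Hence $G''$ is Eulerian (and simple whenever $G$ is), and it is clearly computable in polynomial time.

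The heart of the argument is the identity $\mu(G'')=2\mu(G)+|O|$. For the upper bound, restrict any cut of $G''$ to $G_1$ and to $G_2$: each restriction cuts at most $\mu(G)$ edges, and the $|O|$ cross edges contribute at most $|O|$, giving $\mu(G'')\le 2\mu(G)+|O|$. For the matching lower bound, take a cut $(X,\overline{X})$ of $G$ attaining $\mu(G)$, color the vertices of $G_1$ according to $X$ and the vertices of $G_2$ according to the \emph{complementary} cut $\overline{X}$ (which cuts exactly the same edges of $G$, hence $\mu(G)$ of them); then for each $o\in O$ the endpoints $o^{(1)}$ and $o^{(2)}$ land on opposite sides, so all $|O|$ cross edges are cut, for a total of $2\mu(G)+|O|$. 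Consequently $\mu(G)\ge k$ if and only if $\mu(G'')\ge 2k+|O|$, which completes the reduction.

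The step I expect to be the main obstacle — or at least where naive attempts fail — is exactly this clean correspondence between the two max-cut values. A single-copy fix (adding a new vertex adjacent to all of $O$, or joining the odd vertices in pairs by paths) does make the graph Eulerian, but the new edges are ``cut-favorable'' in an instance-dependent way, so one only obtains $\mu(G'')=2\mu(G)+\text{const}-\min(|O\cap X|,|O\setminus X|)$ for the optimal $X$, and this min-term cannot be controlled. Using two copies glued by a perfect matching on $O$ removes the slack: the complement-cut trick lets both copies simultaneously reach their own maximum while every matching edge is cut, so the parity-repair gadget contributes a constant $|O|$ no matter which optimal cut is chosen. Once this identity is established, \NP-completeness follows immediately.
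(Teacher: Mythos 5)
Your proposal is correct. Note, however, that the paper does not prove this statement at all: it is imported verbatim as a citation to Guruswami, and the only original content nearby is the footnote observing that every Eulerian graph is the walk graph of some necklace, which yields \Cref{cor:k-SeparabilityIsNPComplete}. So there is no in-paper proof to compare against; what you have written is a self-contained hardness proof that could replace the citation. Checking it: membership in \NP{} is indeed trivial; the doubled graph $G''$ has all degrees even (each vertex of odd degree gains exactly one cross edge) and is connected since $G$ is connected and the cross edges are nonempty, hence Eulerian, and it stays simple. The identity $\mu(G'')=2\mu(G)+\abs{O}$ is right: the upper bound follows by restricting a cut to the two copies and counting cross edges separately, and the lower bound follows from your complement-colouring of the second copy, which cuts $\mu(G)$ edges in each copy (a cut and its complement cut the same edge set) and all $\abs{O}$ matching edges simultaneously. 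This is exactly the point where a one-copy parity repair fails, as you say, because the contribution of the repair edges would then depend on how the optimal cut splits $O$; the two-copy matching makes that contribution a fixed constant. The pendant-vertex preprocessing for the case $O=\emptyset$ is also fine (it raises the optimum by exactly $1$), although in that case $G$ is already Eulerian and no reduction is needed. One could quibble that you should state explicitly that the reduction maps the threshold $k$ to $2k+\abs{O}$ as part of the instance, but you do say this, so the argument is complete.
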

Since to compute the separability of a necklace we need to compute the max-cut of its walk graph, and since every Eulerian graph is the walk graph of some necklace\footnote{Simply find a Eulerian path through the graph and place one point per character in the respective color. If some color has an even number of points, add one more to an existing component.}, we get the following corollary:
\begin{corollary}
\label{cor:k-SeparabilityIsNPComplete}
    Given a necklace \Colors of \dimension colors and an integer \sep, deciding whether \Colors is \sep-separable is \coNP-complete.
\end{corollary}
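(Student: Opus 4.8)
The plan is to establish the two halves of \coNP-completeness separately, each essentially by combining the identity $sep(\Colors)=\mu(G_\Colors)$ with the \NP-completeness of max-cut on Eulerian graphs due to Guruswami~\cite{maxCutEulerian}; the footnote realizability statement (every Eulerian graph is a walk graph of some necklace) is the bridge between the two worlds.

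For membership in \coNP, I would argue that a necklace $\Colors$ \emph{fails} to be \sep-separable exactly when $sep(\Colors)>\sep$, i.e., when $\mu(G_\Colors)\ge \sep+1$. A cut $A\subseteq\Colors$ of the walk graph $G_\Colors$ with at least $\sep+1$ crossing edges is then a certificate of non-separability: it has size polynomial in the input, the walk graph $G_\Colors$ can be constructed from $\Colors$ in polynomial time, and the size of the cut $A$ is computable in polynomial time. Hence the complement of \sep-separability lies in \NP, so \sep-separability lies in \coNP.

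For \coNP-hardness I would reduce from the complement of the problem of~\cite{maxCutEulerian}, i.e., from the \coNP-complete problem ``given a Eulerian graph $G$ and an integer $t$, is $\mu(G)\le t-1$?''. Given such an instance $(G,t)$, I would realize $G$ as the walk graph of a necklace $\Colors$ exactly as in the footnote: fix a closed Eulerian trail of $G$, read off the visited vertices as a string of colors, place one point per character at increasing coordinates, and, to respect the odd-cardinality convention, insert one extra nearby point into an existing component of any color that has so far received an even number of points. This takes polynomial time, and the reduction outputs $(\Colors, t-1)$. Correctness is the chain: $\Colors$ is $(t-1)$-separable if and only if $sep(\Colors)\le t-1$, if and only if $\mu(G_\Colors)\le t-1$, if and only if $\mu(G)\le t-1$. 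Combined with the membership argument, this gives \coNP-completeness.

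I do not expect a genuine obstacle; the only point that needs care is verifying that the footnote construction really yields a necklace whose walk graph is the prescribed $G$. One checks that a closed Eulerian trail of $G$ contains each edge $\{a,b\}$ of $G$ exactly once as a substring ``$ab$'' or ``$ba$'' (since it traverses each edge once), that consecutive vertices along the trail are distinct (as $G$ has no loops, so no collapsing of repeated consecutive characters occurs), and that inserting a point into an existing component alters neither the string nor the number of components, hence leaves $G_\Colors$ unchanged. All of this is routine bookkeeping once the max-cut/separability identity and Guruswami's theorem are in hand.
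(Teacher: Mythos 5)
Your proposal is correct and follows exactly the paper's route: the identity $sep(\Colors)=\mu(G_\Colors)$, Guruswami's \NP-completeness of max-cut on Eulerian graphs, and the footnote construction realizing any Eulerian graph as a walk graph, with the natural cut certificate giving \coNP-membership. You simply spell out the details (in particular the verification of the footnote construction and the explicit complement reduction) that the paper leaves implicit.
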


However, not all hope is lost. To check whether a necklace is  $(\dimension-1+\constB)$-separable, we do not need to compute the max-cut of its walk graph, we merely need to check whether it is at most $(\dimension-1+\constB)$. We next provide an FPT algorithm that checks $(\dimension - 1 + \constB)$-separability for fixed parameter \constB. With $\constB=1$ this shows that testing \dimension-separability of \dimension colors is solvable in polynomial time, even though both testing \sep-separability of \dimension colors with \sep as input as well as testing well-separation of point sets are \coNP-complete~\cite{WellSeparationCoNP}. More generally, we show the following theorem:
\begin{theorem}\label{thm:ourFPT}
    There exists an FPT algorithm for fixed parameter $\constB$ that can decide whether the max-cut of a given semi-Eulerian multigraph $G_C$ with \dimension vertices is at most $\dimension-1+\constB$, i.e., it can decide whether $\mu(G_C)\leq \dimension-1+\constB$ in time $2^{O(\constB^2)}\cdot \dimension^4$.
\end{theorem}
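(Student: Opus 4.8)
\medskip

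\noindent The plan is to transform $G_\Colors$ into an instance on which the fixed-parameter max-cut algorithm of Crowston, Jones and Mnich (\Cref{thm:blackbox}) runs with parameter $O(\constB^2)$. Two bounds frame the problem. Since a spanning tree of a connected graph is bipartite, $\mu(G_\Colors)\ge\dimension-1$; deciding $\mu(G_\Colors)\le\dimension-1+\constB$ thus asks whether the max-cut is at most $\constB$ above the spanning-tree bound. And \Cref{cor:multigraphErdos} gives $\mu(G_\Colors)\ge\bound(G_\Colors)=\frac m2+\frac{\dimension-1}4$, so whenever $\bound(G_\Colors)>\dimension-1+\constB$ we may answer ``no''. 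The algorithm of \Cref{thm:blackbox} decides $\mu(G)\le\bound(G)+\const$ in time $2^{O(\const)}\dimension^4$, but only for \emph{simple} graphs, so I will first reduce $G_\Colors$ until the target $\dimension-1+\constB$ is within $O(\constB)$ of $\bound$, and then make the graph simple while the parameter grows by only $O(\constB^2)$.

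For the first part I would repeatedly apply the reduction of \Cref{lem:SeparabilityWhenRemovingInterval}: while $G_\Colors$ contains two adjacent degree-$2$ vertices $c,c'$ (two neighbouring intervals), replace the path $(a,c,c',b)$ by the edge $ab$. This decreases $\mu$ and the number of vertices each by exactly $2$, hence preserves ``$\mu\le\dimension-1+\constB$'', runs in linear time, and can occur at most $\dimension$ times. Afterwards the degree-$2$ vertices form an independent set, all of whose neighbours are branch vertices (degree $\ne2$). Since $G_\Colors$ is semi-Eulerian (\Cref{obs:semiEulerian}, \Cref{lem:odddegrees}), all but at most two vertices have even degree, so branch vertices have degree $\ge4$; double-counting $\sum_v\deg(v)$ then gives $d\ge2\dimension-m-4$, where $d$ is the number of degree-$2$ vertices. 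The set $A$ of degree-$2$ vertices is independent, so \emph{every} edge incident to $A$ crosses the cut $A$, whence $\mu(G_\Colors)\ge2d$. Thus if $m<\tfrac32(\dimension-1)-\tfrac{\constB+4}2$ then $\mu(G_\Colors)\ge2d>\dimension-1+\constB$ and we answer ``no''; otherwise $\bound(G_\Colors)\ge\dimension-1-O(\constB)$, so the target is within $O(\constB)$ of the Edwards--\erdos bound.

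To make the (still possibly multi-)graph simple I would add ``no''-certificates handling the parallel edges, some of which the previous reduction may have created. If some pair $\{u,v\}$ has multiplicity $t>\constB+1$, a spanning tree using a $uv$-edge has a bipartition cutting all $\dimension-1$ tree edges plus the other $t-1$ copies of $\{u,v\}$, so $\mu(G_\Colors)\ge\dimension-1+t-1>\dimension-1+\constB$: answer ``no''. Similar spanning-tree and spanning-forest arguments over the pairs carrying parallel edges, together with a random-cut bound on the subgraph these pairs induce (extended to a cut of $G_\Colors$ by the standard greedy argument gaining one edge per remaining vertex), show that if the total multiplicity $\hat M$ of all multi-edges exceeds $O(\constB^2)$ then $\mu(G_\Colors)>\dimension-1+\constB$ as well. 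In any surviving instance $\hat M=O(\constB^2)$. Now replace each multi-edge of multiplicity $t$ by $t$ internally disjoint paths of length $3$; a short case check shows the result $G'$ is simple and connected with $\mu(G')=\mu(G_\Colors)+2\hat M$ and $|V(G')|=\dimension+2\hat M$, so ``$\mu\le\dimension-1+\constB$'' is preserved verbatim, while $\bound(G')=\bound(G_\Colors)+\tfrac32\hat M$. Hence $\const:=(|V(G')|-1+\constB)-\bound(G')=O(\constB)+\tfrac12\hat M=O(\constB^2)$, and we may assume $\const\ge0$ (otherwise ``no''). Running \Cref{thm:blackbox} on $G'$ with this $\const$ decides $\mu(G')\le|V(G')|-1+\constB$, equivalently $\mu(G_\Colors)\le\dimension-1+\constB$, in time $2^{O(\constB^2)}|V(G')|^4=2^{O(\constB^2)}\dimension^4$; combined with the polynomial-time reductions and checks, this is the claimed bound.

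The step I expect to be the main obstacle is showing that the reduction and the certificates are ``complete'' — that an instance to which none of them applies genuinely has $\bound$ within $O(\constB)$ of $\dimension-1$ \emph{and} only $O(\constB^2)$ excess parallel copies. The degree-$2$ part is clean because the single cut $A$ does the job, but the parallel-edge part needs the separate spanning-tree, spanning-forest and density estimates sketched above, glued together carefully, and all of it must be carried out while handling the necklace edge cases (colours at the two ends of the string, colours whose removal merges two occurrences of another colour, and short cycles), very much in the spirit of the proof of \Cref{thm:intervalOrTwoComponents}. A secondary subtlety is that \Cref{lem:SeparabilityWhenRemovingInterval} is the only two-directional reduction available — the two-component-colour analogue \Cref{lem:SeparabilityWhenCollapsingTwoComponents} goes one way only — so every case other than neighbouring intervals has to be dispatched by a lower bound on $\mu$ rather than by an actual reduction.
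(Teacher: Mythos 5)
Your pipeline is exactly the paper's: exhaustively contract pairs of adjacent degree-$2$ vertices via \Cref{lem:SeparabilityWhenRemovingInterval}, then either reject or conclude that the target $\dimension'-1+\constB$ is within $O(\constB)$ of the Edwards--\erdos bound (this is \Cref{lem:G'} together with \Cref{obs:boundedErdosBound}), then bound the total multiplicity of parallel edges by $O(\constB^2)$ or reject, then replace each multi-edge of multiplicity $m$ by $m$ internally disjoint paths of length three so that the max-cut grows by exactly $2m$ (\Cref{lem:blowUp}), and finally call the algorithm of \Cref{thm:blackbox} with parameter $O(\constB^2)$. Your arithmetic for the interval-elimination phase and for the blow-up (vertices $+2\hat M$, $\bound$ increases by $\tfrac32\hat M$, hence $\const=O(\constB)+\tfrac12\hat M$) matches the paper's proof.

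The one genuine gap is the multiplicity bound, which you yourself flag as the main obstacle: the claim that total multiplicity $\hat M>O(\constB^2)$ forces $\mu(G)>\dimension-1+\constB$ (the paper's \Cref{lem:d+l-separable=>AtMostk2Multiedges}) is asserted but not proved, and the specific mixture you propose would not deliver it. The random-cut-plus-greedy estimate gives roughly $\mu(G)\geq \hat M/2+(\dimension-n_H)$, where $n_H$ is the number of vertices touched by parallel edges; when the excess consists of, say, $p$ vertex-disjoint double edges, this is only about $\dimension-p$, which is useless in precisely the regime you need to exclude. What closes the gap (and is what the paper does) is a purely spanning-forest argument on the ``excess'' graph $H$ that has one edge of weight $m-1$ for each pair of multiplicity $m\ge 2$: any forest of weight $w$ in $H$ extends to a spanning tree of $G$ of weight $\dimension-1+w$, and trees are bipartite, so either $\mu(G)\ge\dimension-1+w>\dimension-1+\constB$ and we reject, or every forest of $H$ has weight at most $\constB$. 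In the latter case, since all weights are at least $1$, every vertex of $H$ has degree at most $\constB$ (its star is a forest), so peeling off at most $\constB$ spanning forests, each of weight at most $\constB$, exhausts $H$; hence the total excess weight is at most $\constB^2$ and $\hat M\le 2\constB^2$. Your single-pair certificate (multiplicity $t>\constB+1$) is the special case of this forest bound with a one-edge forest; the general statement needs the degree bound and the peeling step, not a density or random-cut estimate.
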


By \cref{thm:blackbox}, there exists an algorithm that decides whether a \emph{simple} graph $G$ with \dimension vertices and a fixed paramter \const has a max-cut of size $\mu(G) \geq \bound(G)+\const = \frac{\abs{E(G)}}{2}+ \frac{\dimension-1}{4}+\const$ in $2^{O(\const)}\cdot \dimension^4$ time.
But our input graph $G_\Colors$ is a multigraph and we have no bound on its number of edges, nor on the distance between $\bound(G_\Colors)$ and $\dimension-1+\constB$.
In order to use this algorithm to decide separability, we need the following:
\begin{enumerate}

\item Derive a graph $G_\Colors'$ from $G_\Colors$ such that we \emph{can} bound $\abs{E(G_\Colors')}$ and thus $\bound(G_\Colors')$.

\item Prove that there is a bounded number of multi-edges in $G_\Colors'$.
\item Transform $G_\Colors'$ into a simple graph $G_\Colors''$ by blowing up its multi-edges by a constant factor.
\end{enumerate}

An interval in \Colors is equivalent to a vertex in $G_\Colors$ with degree at most $2$.
In the following, we will use the term \emph{interval} for such vertices and intervals on necklaces interchangeably.

\begin{lemma}
\label{lem:G'}
Given a semi-Eulerian multigraph $G$ on $\dimension$ vertices, we can either detect that $\mu(G)>\dimension-1+\constB$, or we can build a multigraph $G'$ on $\dimension'$ vertices such that $|E(G')|\geq \frac{3}{2}\dimension'-\frac{\constB}{2}-1$, and such that $\mu(G)\leq \dimension-1+\constB$ if and only if $\mu(G')\leq \dimension'-1+\constB$.
\end{lemma}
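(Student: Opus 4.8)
The plan is to repeatedly remove pairs of neighboring intervals (degree-$\le 2$ vertices that are adjacent) from $G$, exactly as in \Cref{lem:SeparabilityWhenRemovingInterval}, since each such removal decreases both $\dimension$ and the target threshold $\dimension-1+\constB$ by $2$ while preserving the ``$\mu\le$ threshold'' predicate. After exhaustively applying this reduction we obtain a graph $G'$ in which no two intervals are neighbors; I then want to argue that this $G'$ is automatically edge-dense enough, or else detect a large cut. First I would note that since $G'$ is still connected and semi-Eulerian (these properties are preserved by the path-contraction reduction), it has at most two odd-degree vertices, and every non-interval vertex has degree $\ge 3$, hence in fact degree $\ge 4$ except for the at most two odd-degree vertices which may have degree $3$. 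Letting $a$ be the number of intervals in $G'$ and $\dimension'$ the total number of vertices, the independence of the intervals gives a cut (take all intervals on one side) of size at least $2a - 2$ roughly; since we are in the ``not detected large cut'' branch we may assume $\mu(G') \le \dimension'-1+\constB$, forcing $a \le \frac{\dimension'+1+\constB}{2}$.

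Next I would lower-bound the number of edges: summing degrees gives $2|E(G')| = \sum_v \deg(v) \ge 2a + 4(\dimension'-a) - c$ for a small constant correction $c$ (accounting for the $\le 2$ odd-degree vertices possibly having degree $3$ instead of $4$, and for the two endpoints of the necklace possibly having degree $1$ instead of $2$). Plugging in the bound $a \le \frac{\dimension'+1+\constB}{2}$ yields $2|E(G')| \ge 4\dimension' - 2a - c \ge 4\dimension' - (\dimension'+1+\constB) - c = 3\dimension' - \constB - 1 - c$, i.e. $|E(G')| \ge \frac{3}{2}\dimension' - \frac{\constB}{2} - \frac{1+c}{2}$, which is the claimed bound up to the additive constant; I would absorb $c$ by being slightly more careful about the boundary and odd-degree vertices, or by noting these contribute only $O(1)$ and checking the inequality is stated with enough slack. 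The ``detect $\mu(G)>\dimension-1+\constB$'' escape hatch is used precisely when the intermediate bound $a \le \frac{\dimension'+1+\constB}{2}$ fails, i.e. when the all-intervals cut already exceeds the threshold — this is detectable in polynomial time by simply counting.

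The main obstacle I anticipate is bookkeeping the constant corrections cleanly: the semi-Eulerian structure allows two odd-degree vertices, the necklace has two endpoints which may be degree-$1$ intervals, and an interval that is simultaneously an endpoint behaves differently again. I would handle this by treating these $O(1)$ exceptional vertices uniformly — note that at most four vertices deviate from the ``interval has degree $2$, non-interval has degree $\ge 4$'' dichotomy, each by a bounded amount — so that the degree sum loses only an additive constant, and then verify that \Cref{cor:multigraphErdos} still gives the contradiction needed downstream (which is really why the exact constant $-1$ versus $-\frac{1+c}{2}$ does not matter for the eventual application). A secondary subtlety is ensuring that the repeated path-contraction never creates a self-loop or gets stuck; since contracting the path $(a,c,c',b)$ with $c,c'$ intervals to an edge $\{a,b\}$ could in principle produce $a=b$, I would observe that in that degenerate case $G$ was a very small graph (a short cycle) handled directly, or that a self-loop can simply be deleted as it contributes $0$ to every cut and does not affect connectivity — and stop the reduction once no two intervals are adjacent, which is exactly the terminal condition matching \Cref{thm:intervalOrTwoComponents}'s hypothesis.
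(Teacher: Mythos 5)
Your proposal follows the paper's proof essentially verbatim: exhaustively remove pairs of adjacent intervals via \cref{lem:SeparabilityWhenRemovingInterval}, detect $\mu(G)>\dimension-1+\constB$ via the cut that puts all (now pairwise non-adjacent) intervals on one side, and otherwise lower-bound $|E(G')|$ by the same degree-sum argument. The only difference is constant bookkeeping for the at most two odd-degree/endpoint vertices, where your slightly more careful accounting gives an additive constant marginally weaker than the stated $-1$; as you correctly note, this $O(1)$ slack is immaterial for the downstream use in \cref{obs:boundedErdosBound} and \cref{thm:ourFPT}.
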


\begin{proof}
Given a multigraph $G$,
let $G'$ be the result of applying \Cref{lem:SeparabilityWhenRemovingInterval} on $G$ exhaustively.
As long as there are two adjacent intervals in $G$, we can remove the two intervals, thus reducing the maximum cut size by $2$.
In each such step we remove $2$ vertices, $3$ edges and add $1$ new edge.
Thanks to \Cref{lem:SeparabilityWhenRemovingInterval}, we have the desired correspondence between $\mu(G)$ and $\mu(G')$.

Assume there are at least $\frac{\dimension' +\constB}{2}$ intervals in $G'$.
Then the cut $A$ in $G'$ with all intervals on one side and all other vertices on the other side has size $\mu(A) \geq 2 \cdot \frac{\dimension' +\constB}{2} = \dimension'+\constB$. 
It follows that $\mu(G') \geq \mu(A) > \dimension' - 1 + \constB$.
In this case we can thus detect that $\mu(G)> \dimension-1+\constB$.

In the other case, there are less than $\frac{\dimension' +\constB}{2}$ intervals in $G'$.
All other vertices have degree at least $4$ (excluding the start and end vertex).
Therefore the sum of degrees in $G'$ is
\[\sum_{v \in V(G')} deg(v) \geq \frac{\dimension' +\constB}{2} \cdot 2 + \frac{\dimension' -\constB}{2} \cdot 4 -2 
=3 \dimension' -\constB-2.\]
Thus the number of edges in $G'$ is $\abs{E(G')} \geq \frac{3}{2}\dimension' -\frac{\constB}{2} -1$.
\end{proof}

We can now see the following.
\begin{observation}
\label{obs:boundedErdosBound}
Given this bound on $|E(G')|$, the bound $\bound(G')$ given by \Cref{cor:multigraphErdos} can be bounded by 
\[\bound(G')\geq\frac{\frac{3}{2}\dimension'-\frac{\constB}{2}-2}{2}+\frac{\dimension'-1}{4}=\dimension'-\frac{\constB}{4}-\frac{5}{4}.\]
\end{observation}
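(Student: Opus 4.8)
The plan is a direct substitution, so this will be short. Corollary~\ref{cor:multigraphErdos} gives $\bound(G')=\frac{|E(G')|}{2}+\frac{\dimension'-1}{4}$, which is monotonically increasing in the number of edges $|E(G')|$. Hence any lower bound on $|E(G')|$ transfers immediately to a lower bound on $\bound(G')$, and there is nothing to do beyond plugging in and simplifying.

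First I would invoke Lemma~\ref{lem:G'}. We are in the branch where $G'$ has actually been constructed (in the other branch the lemma already detects $\mu(G)>\dimension-1+\constB$, so there is nothing to bound), and there we have $|E(G')|\geq\frac{3}{2}\dimension'-\frac{\constB}{2}-1\geq\frac{3}{2}\dimension'-\frac{\constB}{2}-2$. Substituting the last, deliberately weakened expression into $\bound(G')=\frac{|E(G')|}{2}+\frac{\dimension'-1}{4}$ yields
\[\bound(G')\geq\frac{\frac{3}{2}\dimension'-\frac{\constB}{2}-2}{2}+\frac{\dimension'-1}{4},\]
and then I would simply collect fractions: $\frac{3}{4}\dimension'-\frac{\constB}{4}-1+\frac{\dimension'}{4}-\frac{1}{4}=\dimension'-\frac{\constB}{4}-\frac{5}{4}$, which is exactly the claimed inequality.

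The main (and really only) obstacle is bookkeeping of the constants; there is no conceptual content. The one point I would be careful to state is the case distinction inherited from Lemma~\ref{lem:G'}: the bound $\bound(G')\geq\dimension'-\frac{\constB}{4}-\frac{5}{4}$ is only asserted once $G'$ exists. Using the weakened constant $-2$ in place of the tighter $-1$ from Lemma~\ref{lem:G'} costs nothing and produces a clean closed form; it is precisely this form that will later be used to certify that the gap between $\bound(G')$ and the threshold $\dimension'-1+\constB$ is $O(\constB)$, so that the black-box algorithm of \Cref{thm:blackbox} can be applied with a parameter depending only on $\constB$.
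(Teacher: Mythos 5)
Your proposal is correct and matches the paper's (implicit) reasoning exactly: the observation is nothing more than substituting the edge bound $|E(G')|\geq\frac{3}{2}\dimension'-\frac{\constB}{2}-1$ from \Cref{lem:G'} into $\bound(G')=\frac{|E(G')|}{2}+\frac{\dimension'-1}{4}$ and simplifying, and you correctly note that the stated numerator with $-2$ is only a harmless weakening of the lemma's $-1$. Your remarks about the case distinction (the bound only applies once $G'$ is actually constructed) and the later use of the bound to control the parameter for \Cref{thm:blackbox} are consistent with how the paper uses the observation.
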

Thus, by the process of eliminating neighboring intervals, we have managed to get the difference between $(\dimension'-1+\constB)$ and $\bound(G_\Colors')$ to be a constant depending only on $\constB$.

Next we show that the total multiplicity $M$ of the multi-edges in $G_\Colors'$ cannot be too large. We show that if $G_\Colors'$ has maximum cut size at most $\dimension'-1+\constB$, the total multiplicity of multi-edges can be bounded by a function solely depending on $\constB$, and not $\dimension$ or $|E(G_\Colors')|$.

\begin{lemma}
\label{lem:d+l-separable=>AtMostk2Multiedges}
In a multigraph $G$ on $\dimension$ vertices with $\mu(G)\leq \dimension-1+\constB$, the total multiplicity of the multi-edges in $G$ is at most $2\constB^2$.
\end{lemma}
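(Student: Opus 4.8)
The plan is to reformulate $\mu(G)$ as a weighted max-cut on the underlying simple graph $\underline{G}$: one vertex per color, and each pair carrying weight equal to its multiplicity, so that $\mu(G)=\mu(\underline G,w)$ with total weight $W=|E(G)|$. Two structural facts drive the argument. First, any spanning tree $T$ of $\underline G$ is bipartite, so its proper $2$-coloring cuts \emph{every} copy of every edge of $T$; hence $\mu(G)\ge w(T)$, and taking $T=T^\ast$ a maximum-weight spanning tree gives $\mu(G)\ge w(T^\ast)$. Since $w(T^\ast)=(\dimension-1)+\sum_{e\in T^\ast}(w(e)-1)$, the hypothesis $\mu(G)\le \dimension-1+\constB$ forces the ``excess multiplicity carried by $T^\ast$'' to be small: $\sum_{e\in T^\ast}(w(e)-1)\le\constB$. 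In particular $T^\ast$ contains at most $\constB$ multi-edges, their excesses sum to at most $\constB$, and — because the globally heaviest edge always lies on $T^\ast$ — no multiplicity exceeds $\constB+1$. (Note this is exactly the place where the $\dimension-1$ of the tree bound cancels the $\dimension$ in the separability hypothesis, so that a bound independent of $\dimension$ is possible at all.)

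Second, I would exploit the cycle-exchange property of maximum-weight spanning trees: for a multi-edge $e\notin T^\ast$ of multiplicity $t$, every edge of $T^\ast$ on the fundamental cycle of $e$ has multiplicity at least $t$. The multi-edges of $T^\ast$ of multiplicity $\ge t$ each eat at least $t-1$ of the excess budget $\constB$, so there are at most $\constB/(t-1)$ of them; they form a sub-forest on $O(\constB/(t-1))$ vertices, and the fundamental cycle of $e$ lies inside a single connected piece of it. Consequently \emph{all} multi-edges of multiplicity $\ge t$ (on and off the tree) have both endpoints inside a vertex set of size $O(\constB/(t-1))$, so there are only $O\bigl((\constB/(t-1))^2\bigr)$ of them. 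Writing the total multiplicity as $\sum_{t\ge 1}\#\{\text{multi-edges of multiplicity}\ge t\}$ and summing over the levels $t=2,\dots,\constB+1$, the level-$2$ term $O(\constB^2)$ dominates and the tail converges, so the total multiplicity is $O(\constB^2)$; with the estimates tightened — using that the heavy sub-forests and their bounding vertex sets are as small as the excess budget allows — this yields the claimed bound $2\constB^2$.

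The main obstacle is precisely this last, quantitative step. The structural facts above are clean, but a naive ``number of confined multi-edges times maximum multiplicity'' bound is far too weak, and even the level-by-level summation as sketched leaves a constant worse than $2$; getting exactly $2\constB^2$ requires care about double-counting tree versus non-tree multi-edges and about the precise vertex count of the heavy sub-forests (fewer multi-edges fit into several small components than into one large component with the same number of edges). A possible aid for this step is that the mult-$1$ edges of $G$, again by the excess bound, split $G$ into at most $\constB+1$ connected pieces, which confines where multi-edges can live; one can then try to feed the heavy region into the Edwards--\erdos/Poljak--Turz\'ik estimate (\Cref{cor:multigraphErdos}) to turn ``too much multiplicity on too few vertices'' directly into a cut exceeding $\dimension-1+\constB$, contradicting the hypothesis.
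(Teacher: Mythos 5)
Your first step is the same as the paper's: a spanning tree is bipartite, so its two-coloring cuts every copy of each tree edge, giving $\mu(G)\ge(\dimension-1)+\sum_{e\in T^\ast}(w(e)-1)$ and hence, under the hypothesis, a total ``excess'' of at most $\constB$ on a maximum-weight spanning tree (the paper phrases this as: every spanning forest of the auxiliary simple graph with one edge of weight $m-1$ per multiplicity-$m\ge 2$ pair has weight at most $\constB$). From there you diverge: you invoke the cycle property of the maximum-weight spanning tree to confine every multi-edge of multiplicity at least $t$ to a single component of the level-$t$ heavy subforest of $T^\ast$, and then sum over levels $t$. This part is sound and does prove that the total multiplicity is $O(\constB^2)$, which is all that the downstream application (\cref{thm:ourFPT}, \cref{alg:FPT}) really needs. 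However, the lemma asserts the explicit bound $2\constB^2$, and, as you concede, your sketch does not reach it: bounding the level-$2$ term by ``(vertices of the heavy subforest) choose $2$'' with up to $2\constB$ vertices already costs about $2\constB^2$ at that single level, and the total multiplicity counts that level twice. So as written there is a genuine quantitative gap between what you prove and what is claimed.

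The gap is repairable, in two ways. The paper's finish avoids the cycle property entirely: in the excess graph every vertex has degree at most $\constB$ (its star is a forest of weight at least its degree, and extends to a spanning forest of weight at most $\constB$), so the excess graph is emptied by peeling off at most $\constB$ spanning forests, each of weight at most $\constB$; hence the total excess is at most $\constB^2$, and since $m\le 2(m-1)$ for $m\ge 2$ the total multiplicity is at most $2\constB^2$. Alternatively, your own route can be tightened: inside a heavy component with $a_j$ tree edges the only tree pairs are its $a_j$ edges, so the number of non-tree multi-edges of multiplicity at least $t$ is at most $\sum_j\bigl(\binom{a_j+1}{2}-a_j\bigr)=\sum_j\binom{a_j}{2}\le\binom{\lfloor\constB/(t-1)\rfloor}{2}$; together with the tree contribution (at most $2\constB$) the level sum comes to roughly $\tfrac43\constB^2+\constB$, which one can check is below $2\constB^2$ for all $\constB\ge 1$. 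Either repair works, but the paper's degree-plus-forest-peeling argument is shorter and needs nothing from $T^\ast$ beyond the excess bound you already established.
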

\begin{proof}
Let  $G'$ be a weighted simple graph with an edge of weight $m-1$ for every multi-edge of multiplicity $m\geq 2$ in the graph $G$.
Note that the total weight of $G'$ is at least half of the total multiplicity of multi-edges in $G$. 

Let $F$ be a spanning forest in $G'$ with total weight $w$. 
Given $F$, we can build a spanning tree $T$ of $G$ of total weight $n-1+w$, since every edge of $F$ of weight $m'-1$ corresponds to a multi-edge of multiplicity $m'$ in $G$, and all additional edges used to make $F$ into a spanning tree have weight $1$. Since every tree is bipartite, the weight of $T$ is a lower bound on the max-cut of $G$: $\mu(G) \geq \dimension-1+w$.
Thus, for a given $G$ with $\mu(G) \leq \dimension-1+\constB$, the total weight of $F$ must be at most $\constB$.

We thus only need to show that in a simple weighted graph (in our case, $G'$), in which every weight is at least $1$, and whose maximum-weight spanning forest has weight at most $\constB$, the total weight of the graph is at most $\constB^2$. To see this, we successively remove spanning forests from $G'$ until $G'$ is empty. Every spanning forest we remove has weight at most $\constB$. As every edge has weight at least $1$, every vertex in $G'$ has degree at most $\constB$. Thus, we are done after removing at most $\constB$ spanning forests. Thus, the total weight of $G'$ is at most $\constB^2$.

We conclude that the total multiplicity of multi-edges in $G$ can be at most $2\constB^2$.
\end{proof}

Finally, we show how $G_C'$ can be transformed into a simple graph $G_C''$. Let \nodeA and \nodeB be vertices in $G_C'$ with a multi-edge of multiplicity $m$ between them.
We construct the graph $G_C''$ from $G_C'$ by removing the multi-edge between \nodeA and \nodeB and introducing $m$ paths of length three from \nodeA to \nodeB, all going through separate vertices.
See \cref{fig:blowUpMultiedges} for an example application of this process, and \cref{alg:blowUp} for pseudo-code describing it.

\begin{algorithm}[h!]
\caption{Simplifying a multi-edge of multiplicity $m$}\label{alg:blowUp}
\begin{algorithmic}[1]
\Statex{\textbf{Input:} A Graph $G$ with \dimension vertices, two vertices \nodeA and \nodeB with a multi-edge of multiplicity $m$ between them.}
\Statex{\textbf{Output:} A Graph $G'$ with $\dimension+2m$ vertices}

\State $V(G') \gets V(G) \cup \{i_{j,1}, i_{j,2} \;\vert\; j\in [m]\}$, where $i_{j,1},i_{j,2}\notin V(G)$

\State $E(G') \gets E(G) \setminus \{\{\nodeA, \nodeB\}\}$ 
\State $E(G') \gets E(G') \cup \{\{\nodeA, i_{j,1}\},\{i_{j,1}, i_{j,2}\},\{i_{j,2}, \nodeB\} \;\vert\; j\in [m]\}$
\State \Return $G'$

\end{algorithmic}
\end{algorithm}

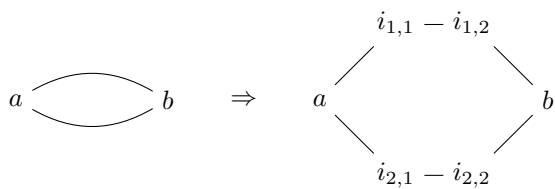
\begin{figure}[h!]
\centering
\begin{tikzpicture}
\newcommand{\offset}{4}
\node (a) at (0,0) {\nodeA};
\node (b) at (2,0) {\nodeB};
\draw (a) edge[bend left] (b);
\draw (a) edge[bend right] (b);

\node (arrow) at (3,0) {$\Rightarrow$};

\node (a2) at (0+ \offset,0) {\nodeA};
\node (b2) at (3+ \offset,0) {\nodeB};
\node (i1) at (1+ \offset,1) {$i_{1,1}$};
\node (i2) at (2+ \offset,1) {$i_{1,2}$};
\node (i3) at (1+ \offset,-1) {$i_{2,1}$};
\node (i4) at (2+ \offset,-1) {$i_{2,2}$};
\draw (a2) edge (i1);
\draw (i1) edge (i2);
\draw (i2) edge (b2);
\draw (a2) edge (i3);
\draw (i3) edge (i4);
\draw (i4) edge (b2);
\end{tikzpicture}
\caption{Example of \cref{alg:blowUp} to blow up a multi-edge of multiplicity 2 to make the graph simple.}
\label{fig:blowUpMultiedges}
\end{figure}

This process is again constructed in such a way that the change of the max-cut is predictable:
\begin{lemma}
\label{lem:blowUp}
Let $G$ be a multigraph on \dimension vertices.
Let \nodeA and \nodeB be vertices in $G$ with a multi-edge of multiplicity $m$ between them.
Let $G'$ be the result of \cref{alg:blowUp} called with $(G, \nodeA, \nodeB)$. Then, $\mu(G')=\mu(G)+2m$.
\end{lemma}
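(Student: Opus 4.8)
The plan is to establish the two inequalities $\mu(G')\geq \mu(G)+2m$ and $\mu(G')\leq \mu(G)+2m$ separately, each by a direct translation of cuts between $G$ and $G'$. The only combinatorial ingredient needed is an elementary parity fact about the three new paths: along a path with an odd number of edges, the number of edges crossing a cut has a fixed parity determined by whether the two endpoints lie on the same side of the cut. Concretely, for a length-$3$ path $\nodeA-i_{j,1}-i_{j,2}-\nodeB$: if $\nodeA$ and $\nodeB$ lie on the same side of a cut, then at most $2$ of its edges are cut, and $2$ is attained (e.g. by putting exactly one of $i_{j,1},i_{j,2}$ on the side of $\nodeA$); if $\nodeA$ and $\nodeB$ lie on opposite sides, then at most $3$ of its edges are cut, and $3$ is attained (by putting $i_{j,1}$ on the side of $\nodeB$ and $i_{j,2}$ on the side of $\nodeA$). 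In parallel, the original multi-edge of multiplicity $m$ between $\nodeA$ and $\nodeB$ contributes exactly $0$ to a cut of $G$ if $\nodeA,\nodeB$ are on the same side, and exactly $m$ if they are on opposite sides.

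\textbf{Lower bound.} I would take a maximum cut $A$ of $G$ and extend it to a cut $A'$ of $G'$ by assigning the new vertices $i_{j,1},i_{j,2}$ optimally on each of the $m$ paths, as above. The edge set of $G'$ differs from that of $G$ only in that the multi-edge $\{\nodeA,\nodeB\}$ of multiplicity $m$ has been replaced by the $m$ length-$3$ paths, so the size of $A'$ in $G'$ equals the size of $A$ in $G$, minus the contribution of the multi-edge, plus the total gain on the $m$ paths. If $\nodeA,\nodeB$ lie on the same side of $A$, this is $\mu(G)-0+2m$; if they lie on opposite sides, it is $\mu(G)-m+3m$; both equal $\mu(G)+2m$. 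Hence $\mu(G')\geq\mu(G)+2m$.

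\textbf{Upper bound.} I would take a maximum cut $A'$ of $G'$ and restrict it to $A:=A'\cap V(G)$. All edges of $G'$ other than the new path edges are exactly the edges of $G$ other than the multi-edge $\{\nodeA,\nodeB\}$, and such an edge is cut by $A'$ if and only if it is cut by $A$. By the parity fact the path edges contribute at most $2m$ to $A'$ if $\nodeA,\nodeB$ are on the same side of $A'$ (equivalently of $A$), and at most $3m$ if they are on opposite sides. Comparing the size of $A'$ in $G'$ with that of $A$ in $G$: in the same-side case the multi-edge adds $0$ to $A$ while the paths add at most $2m$ to $A'$; in the opposite-side case the multi-edge adds $m$ to $A$ while the paths add at most $3m$ to $A'$. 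Either way the size of $A$ in $G$ is at least the size of $A'$ in $G'$ minus $2m$, so $\mu(G)\geq\mu(G')-2m$. Combining the two bounds gives $\mu(G')=\mu(G)+2m$.

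\textbf{Main difficulty.} There is no real obstacle; the one point to get right is the bookkeeping showing that \emph{both} cases ($\nodeA,\nodeB$ on the same side vs.\ on opposite sides) yield exactly the net change $+2m$. This is the small coincidence that makes a length-$3$ replacement (rather than length $2$ or $4$) the correct gadget, and it is precisely why the paths introduced in \cref{alg:blowUp} have length three.
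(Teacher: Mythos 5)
Your proof is correct and follows essentially the same route as the paper: both directions are obtained by translating cuts between $G$ and $G'$ with a case distinction on whether \nodeA and \nodeB lie on the same or opposite sides, and the length-$3$ paths contribute $2m$ resp.\ $3m$ accordingly. Your explicit bookkeeping in the upper bound (bounding the path contribution by $2m$ or $3m$ without invoking maximality of the cut on the gadget) is, if anything, slightly cleaner than the paper's phrasing, but it is not a different argument.
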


\begin{proof}
Let $A \subseteq V(G)$ be some max-cut in $G$ with $\mu(A) = \mu(G).$

We distinguish between two cases.
If the multi-edge goes across the cut, i.e. $\nodeA \in A$ and $\nodeB \notin A$,
the same cut in $G'$ has $m$ fewer edges (namely the multi-edge) and $3m$ edges more, namely all of the newly introduced edges of the paths, see \cref{fig:multiEdgeIsInMaxCut}.
If the multi-edge between \nodeA and \nodeB is \emph{not} in the max-cut of $G$, there is a cut in $G'$ that has $2m$ new edges, namely one of each newly introduced path, see \cref{fig:multiEdgeIsNotInMaxCut}.
Thus, a max-cut of size $\mu(G)$ in $G$ implies a cut of size $\mu(G) + 2m$ in $G'$, and thus $\mu(G')\geq \mu(G)+2m$. 

For the other direction, consider a max-cut $A'$ of $G'$. Since $A'$ is maximal, it must either contain all $3m$ intermediate edges between \nodeA and \nodeB, and put \nodeA and \nodeB on different sides of the cut, or it must put \nodeA and \nodeB on the same side of the cut, and contain exactly $2m$ intermediate edges (see again \Cref{fig:blowUpMultiedgesWithCut}). Thus, there must exist a cut $A$ in $G$ which contains exactly $2m$ fewer edges than $A'$, and we get $\mu(G)\geq \mu(G')-2m$.

We conclude that $\mu(G')=\mu(G)+2m$.
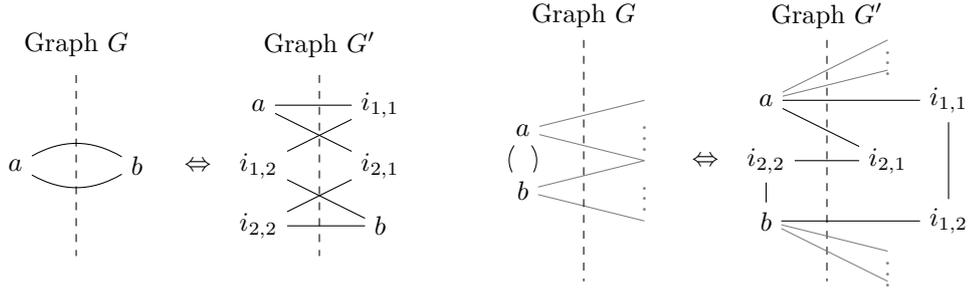
\begin{figure}[h!]
\begin{subfigure}[b]{0.5\textwidth}
\centering
\begin{tikzpicture}[scale=0.8]
\newcommand{\offset}{4}
\node (a) at (0,0) {\nodeA};
\node (b) at (2,0) {\nodeB};
\draw (a) edge[bend left] (b);
\draw (a) edge[bend right] (b);
\draw[dashed] (1,1.5) -- (1,-1.5);

\node (arrow) at (3,0) {$\Leftrightarrow$};

\node (a2) at (0+ \offset,1) {\nodeA};
\node (b2) at (2+ \offset,-1) {\nodeB};
\node (i1) at (2+ \offset,1) {$i_{1,1}$};
\node (i2) at (0+ \offset,0) {$i_{1,2}$};
\node (i3) at (2+ \offset,0) {$i_{2,1}$};
\node (i4) at (0+ \offset,-1) {$i_{2,2}$};
\draw (a2) edge (i1);
\draw (i1) edge (i2);
\draw (i2) edge (b2);
\draw (a2) edge (i3);
\draw (i3) edge (i4);
\draw (i4) edge (b2);
\draw[dashed] (1+\offset,1.5) -- (1+\offset,-1.5);
\node at (1, 2) {Graph $G$};
\node at (1+\offset, 2) {Graph $G'$};
\node at (0, -2){};
\node at (0, 2){};
\end{tikzpicture}
\caption{Case 1: The multi-edge is in max-cut of $G$.}
\label{fig:multiEdgeIsInMaxCut}
\end{subfigure}
\begin{subfigure}[b]{0.5\textwidth}
\centering
\begin{tikzpicture}[scale=0.8]
\newcommand{\offset}{4}
\node (a) at (0,0.5) {\nodeA};
\node (b) at (0,-0.5) {\nodeB};
\draw[gray] (a) edge (2,1);
\draw[gray] (a) edge (2,0);
\draw[gray] (b) edge (2,-1);
\draw[gray] (b) edge (2,0);
\node[gray] at (2, 0.5) {\vdots};
\node[gray] at (2, -0.5) {\vdots};
\draw (a) edge[bend left] (b);
\draw (a) edge[bend right] (b);
\draw[dashed] (1,2) -- (1,-2);

\node (arrow) at (3,0) {$\Leftrightarrow$};

\node (a2) at (0+ \offset,1) {\nodeA};
\node (b2) at (0+ \offset,-1) {\nodeB};
\node (i1) at (3+ \offset,1) {$i_{1,1}$};
\node (i2) at (3+ \offset,-1) {$i_{1,2}$};
\node (i3) at (2+ \offset,0) {$i_{2,1}$};
\node (i4) at (0+ \offset,0) {$i_{2,2}$};
\draw (a2) edge (i1);
\draw (i1) edge (i2);
\draw (i2) edge (b2);
\draw (a2) edge (i3);
\draw (i3) edge (i4);
\draw (i4) edge (b2);
\draw[dashed] (1+\offset,2) -- (1+\offset,-2);

\draw[gray] (a2) edge (2+\offset,2);
\draw[gray] (a2) edge (2+\offset,1.5);
\draw[gray] (b2) edge (2+\offset,-2);
\draw[gray] (b2) edge (2+\offset,-1.5);
\node[gray] at (2+\offset, 1.75) {\vdots};
\node[gray] at (2+\offset, -1.75) {\vdots};
\node at (1, 2.4) {Graph $G$};
\node at (1+\offset, 2.4) {Graph $G'$};
\end{tikzpicture}
\caption{Case 2: The multi-edge is \emph{not} in max-cut of $G$.}
\label{fig:multiEdgeIsNotInMaxCut}
\end{subfigure}
\caption{Change of max-cut size when blowing up a multi-edge of multiplicity 2.}
\label{fig:blowUpMultiedgesWithCut}
\end{figure}
\end{proof}

We are now ready to put this all together and describe
the algorithm proving \Cref{thm:ourFPT}.

\begin{algorithm}[h!]
\caption{FPT algorithm for testing $\mu(G_C)\leq \dimension-1+\constB$ with fixed parameter \constB}\label{alg:FPT}
\begin{algorithmic}[1]
\Statex{\textbf{Input:} A semi-Eulerian multigraph $G_\Colors$ on $\dimension$ vertices.}
\Statex{\textbf{Output:} True iff $\mu(G_\Colors)\leq \dimension - 1 + \constB$.}

\State $G_\Colors' \gets G_\Colors$
\While{there exist neighboring intervals in $G_\Colors'$}\label{alg:FPT:RemoveIntervals}
    \State Remove two neighboring intervals from $G_\Colors'$.
\EndWhile

\State $\dimension' \gets |V(G_\Colors')|$

\State $i \gets $ The number of intervals in $G_\Colors'$.\label{alg:FPT:CountIntervals}
\If{$i> \frac{\dimension' + \constB}{2}$}\label{alg:FPT:TooManyIntervals}
    \Return false \Comment{based on \Cref{lem:G'}}
\EndIf

\State $M \gets$ The total multiplicity of multi-edges in $G_\Colors'$.

\If{$M > 2\constB^2$}
    \Return false \Comment{based on \Cref{lem:d+l-separable=>AtMostk2Multiedges}}
\EndIf

\State{$G_\Colors'' \gets$ The result of applying \cref{alg:blowUp} to every multi-edge in $G'$.}\label{alg:FPT:blowUp}

\State \Return $\mu(G_\Colors'')\leq (\dimension'-1+\constB)+2M$ \Comment{using FPT algorithm of \Cref{thm:blackbox}.}

\end{algorithmic}
\end{algorithm}

\begin{proof}[Proof of \Cref{thm:ourFPT}]
We prove that \Cref{alg:FPT} is correct and runs in time $2^{f(\constB)}\cdot \dimension^4$. 
Correctness follows from \cref{lem:G'}, \cref{lem:d+l-separable=>AtMostk2Multiedges} and \cref{lem:blowUp}.
Clearly, all steps except the invocation of the FPT algorithm of \Cref{thm:blackbox} in the last line can be performed in $O(\dimension^2+\constB^2)$.

We choose \const such that when we call the FPT algorithm of \Cref{thm:blackbox} with $G_\Colors''$ and \const it decides $\mu(G_\Colors'')\leq (\dimension' -1 + \constB)+2M$, i.e., we
choose \const such that $(\dimension' - 1 +\constB)+2M= \bound(G_\Colors'')+\const$.
Therefore let $\const:=((\dimension'-1+\constB)+2M) - \bound(G_\Colors'')$.
For bounding the runtime of this invocation, we need to check that $\const$ is dependent only on $\constB$.
Recall that by \Cref{obs:boundedErdosBound} we can bound $(\dimension'-1+\constB)-\bound(G_\Colors')\leq \frac{5}{4}\constB+\frac{1}{4}$, a quantity depending only on our parameter $\constB$. Furthermore, applying \Cref{alg:blowUp} to a multi-edge of multiplicity $m$ changes $\bound$ by $\frac{3}{2}m$. Thus we have $\bound(G_\Colors'')=\bound(G_\Colors')+\frac{3}{2}M$. Since $M\leq\constB^2$, we get that $k$ is bounded by $O(\constB^2)$.
Thus, the final invocation of the algorithm of \Cref{thm:blackbox} runs in time $2^{O(\constB^2)}\cdot n^4$. 
\end{proof}

\section{Conclusion and Further Directions}

In conclusion, we proved that
\TwoThiefNecklaceSplitting on \dimension-separable necklaces has a unique solution and can be solved in polynomial time. Also \dimension-separability can be tested in polynomial time.
Furthermore, we showed that \TwoThiefNecklaceSplitting, which in general is known to be \PPA-complete, admits an FPT algorithm for the parameter \constB such that the input necklace is $(\dimension-1+\constB)$-separable. Lastly, we showed that testing $(\dimension-1+\constB)$-separability is also FPT, even though testing well-separation of point sets in $\Reals^\dimension$ is \coNP-complete.

The condition of $\dimension$-separability is only sufficient for uniqueness of the solution to \TwoThiefNecklaceSplitting. To the best of our knowledge, there is currently no known necessary condition for uniqueness.

As our main open question we wonder how our algorithm for \TwoThiefNecklaceSplitting can be extended to more general settings. Firstly, can we also find polynomial time algorithms for $k$-Thief-Necklace-Splitting under the constraint of $n$-separability? Secondly, instead of halving every color class, can we maybe find an algorithm to find any $(\alpha_1,\ldots,\alpha_n)$-cut? The existence of these cuts is also guaranteed by \Cref{lem:alphaHS}, however our algorithm really only works for halving, since if we are not halving, the solution is not guaranteed to split a color with two components in the bigger component. 

Another interesting followup question is whether one can lift the definition of \sep-separability into higher dimensions. In other words, for \dimension point sets $P = \{P_1, \dots, P_\dimension\}$ in $\Reals^d$, can each subset $A$ of $P$ be separated from $P\setminus A$ by \sep hyperplanes?
Well-separation then becomes $1$-separability. 
Thus, deciding \sep-separability for $k$ as input or even for the case $k=1$ is \coNP-hard. It is likely that special cases such as  $d$-separability or \dimension-separability are also hard to decide. While well-separation is also contained in \coNP, this is not clear for $k$-separability for $k>1$.
Like \TwoThiefNecklaceSplitting, which has a unique solution under the condition of \dimension-separability, one could also investigate whether there are other geometric problems which gain interesting properties under the condition of the input being well-separated, or $k$-separable for some $k$.

Finally, can we extend our FPT algorithm for deciding $\mu(G)\leq \dimension-1+\constB$ on semi-Eulerian multigraphs to work on all connected multigraphs? Furthermore, can we maybe also decide $\mu(G)\leq \bound(G)+\constB$ (to get a direct analogue of the algorithm of Crowston, Jones, and Mnich for multigraphs) and not just $\mu(G)\leq n-1+\constB$?

\clearpage
\bibliography{literature}

\newpage
\appendix
\section{Non-Odd Number of Points}\label{app:non-odd}
If we drop the assumption that every color of a necklace is a discrete set of points of odd cardinality, we need to slightly adjust our definitions and results. In this more general setting, we consider a color to be a union of finitely many intervals, or finitely many points.

\subsection{Unique Solutions}
In this section, we discuss the necessary changes for recovering \Cref{thm:uniquenessOfSolution}, the uniqueness of solutions to \TwoThiefNecklaceSplitting on $n$-separable necklaces with $n$ colors.

Recall the proof of \Cref{thm:uniquenessOfSolution}: We apply the $\alpha$-Ham-Sandwich theorem (\Cref{lem:alphaHS}) to get a unique halving cut for the necklace lifted to the moment curve. It is an essential part of \Cref{lem:alphaHS} that solution hyperplanes must contain a point from each point set. In the case where every color consists of an odd number of points, this was automatically guaranteed by our definition, since the only way to split an odd number of points into two equal parts is to place a split point on one point of each color. To ensure this in the more general setting, we adjust the definition of \TwoThiefNecklaceSplitting as follows:
\begin{enumerate}
\item For the $n$ split points $q_1,\ldots,q_n$, we must have $q_i\in P_i$ for all $i\in [n]$. In other words, we must pick one point from each color as a split point. %
\item The intervals are not considered open but half-open: All split point belongs to \thiefA.
\end{enumerate}
The first change allows us to apply the $\alpha$-Ham-Sandwich theorem, while the second change is necessary to ensure that we can still halve colors consisting of an even number of points.
With these changes, \Cref{thm:uniquenessOfSolution} still holds.

\subsection{Solving \TwoThiefNecklaceSplitting}
It is crucial to \Cref{alg:solveNecklaceSplitting} that when we consider a color consisting of two components, we can identify one component for which we know the existence of a solution with a split point in this component. We can then remove the other component. If the two components have different size, we can achieve this by simply keeping the larger component; it must contain the split point. Under the assumption that every color consists of an odd number of points, it is guaranteed that the two components have different size, however this does not hold in general.

If we adjust the definition of \TwoThiefNecklaceSplitting as outlined above, since there is a unique solution, there is a fixed choice of component in which we have to place our split point in. However, this choice depends on the parity of the number of split points placed in between the components, which is unknown to the algorithm at this point. For the algorithm to work, we thus have to stick with the original definition of \TwoThiefNecklaceSplitting (\Cref{def:TwoThiefNecklaceSplitting}) and only adapt our algorithm.

We adapt the algorithm, by removing lines \ref{alg:solveNecklaceSplitting:median1} and \ref{alg:solveNecklaceSplitting:median2}, since it is no longer necessary to guarantee that the remaining component has odd cardinality. Furthermore, when we adjust the split point on line \ref{alg:solveNecklaceSplitting:adjustSplit}, we allow the algorithm to shift the split point beyond the end of the component (in which case the split point is placed between the end of the component and the beginning of the next color). With these adjustments, the algorithm still works.

Note that we were not able to find a version of \Cref{def:TwoThiefNecklaceSplitting} that allows for both \Cref{thm:uniquenessOfSolution} as well as \Cref{alg:solveNecklaceSplitting} to work at the same time.

\end{document}